\theoremstyle{plain}
\newtheorem{thm}{\bf Theorem}[section]
\newtheorem{prop}[thm]{\bf Proposition}
\newtheorem{lemma}[thm]{\bf Lemma}
\newtheorem{corollary}[thm]{\bf Corollary}
\theoremstyle{definition}
\newtheorem{definition}[thm]{\bf Definition}
\newtheorem{discussion}[thm]{\bf Discussion}
\theoremstyle{remark}
\newtheorem{remark}[thm]{\bf Remark}
\newtheorem{example}[thm]{\bf Example}
\theoremstyle{example}
\def\NN{{\NZQ N}}
\def\ZZ{{\NZQ Z}}
\def\frk{\frak}               
\def\Phi{{\frk n}}
\def\Phi{{\frk N}}
\def\opn#1#2{\def#1{\operatorname{#2}}} 
\opn\chara{char} \opn\length{\ell} \opn\pd{pd} \opn\rk{rk}
\opn\projdim{proj\,dim} \opn\injdim{inj\,dim} \opn\rank{rank}
\opn\depth{depth} \opn\grade{grade} \opn\height{height}
\opn\embdim{emb\,dim} \opn\codim{codim}
\opn\Tr{Tr} \opn\bigrank{big\,rank}
\opn\superheight{superheight}\opn\lcm{lcm}
\opn\trdeg{tr\,deg}
\opn\reg{reg} \opn\lreg{lreg} \opn\ini{in} \opn\lpd{lpd}
\opn\size{size} \opn\sdepth{sdepth}
\opn\link{link}\opn\fdepth{fdepth}\opn\lex{lex}
\opn\div{div} \opn\Div{Div} \opn\cl{cl} \opn\Cl{Cl}
\opn\Spec{Spec} \opn\Supp{Supp} \opn\supp{supp} \opn\Sing{Sing}
\opn\Ass{Ass} \opn\Min{Min}\opn\Mon{Mon}
\opn\Ann{Ann} \opn\Rad{Rad} \opn\Soc{Soc}
\opn\Im{Im} \opn\Ker{Ker} \opn\Coker{Coker} \opn\Am{Am}
\opn\Hom{Hom} \opn\Tor{Tor} \opn\Ext{Ext} \opn\End{End}
\opn\Aut{Aut} \opn\id{id}
\opn\nat{nat}
\opn\pff{pf}
\opn\Pf{Pf} \opn\GL{GL} \opn\SL{SL} \opn\mod{mod} \opn\ord{ord}
\opn\Gin{Gin} \opn\Hilb{Hilb}\opn\sort{sort}
\opn\aff{aff} \opn\con{conv} \opn\relint{relint} \opn\st{st}
\opn\lk{lk} \opn\cn{cn} \opn\core{core} \opn\vol{vol}
\opn\link{link} \opn\star{star}\opn\lex{lex}\opn\set{set}
\opn\gr{gr}
\def\pot#1#2{#1[\kern-0.28ex[#2]\kern-0.28ex]}
\opn\dirlim{\underrightarrow{\lim}}
\opn\inivlim{\underleftarrow{\lim}}
\def \GL{{\operatorname{GL}}}
\def \HF{{\operatorname{HF}}}
\def \chara{{\operatorname{char}}}
\def \height{{\operatorname{ht}}}
\def \reg{{\operatorname{reg}}}
\def \depth{{\operatorname{depth}}}
\def \Gin{{\operatorname{Gin}}}
\def \grade{{\operatorname{grade}}}
\def \Ker{{\operatorname{Ker}}}
\def \pd{{\operatorname{pd}}}
\def \NN{\mathbb N}
\def \ZZ{\mathbb Z}
\def \S{\mathcal S}
\def \dual{\operatorname{dual}}
\def \HF{{\operatorname{HF}}}
\def \chara{{\operatorname{char}}}
\def \height{{\operatorname{ht}}}
\def \reg{{\operatorname{reg}}}
\def \depth{{\operatorname{depth}}}
\def \Gin{{\operatorname{Gin}}}
\def \grade{{\operatorname{grade}}}
\def \Ker{{\operatorname{Ker}}}
\def \pd{{\operatorname{pd}}}
\def \NN{\mathbb N}
\def \ZZ{\mathbb Z}
\def \S_d{\mathcal{M}(d)}
\def \S{\mathcal S}
\def\ini{\operatorname{\rm in}}
\begin{document}
\title{An intriguing ring structure on the set of $d$-forms}
\author{J\"urgen Herzog}
\address{Fachbereich Mathematik, Universit\"at Duisburg-Essen, Campus Essen, 45117
Essen, Germany}
\email{juergen.herzog@uni-essen.de}
\author{Leila Sharifan}
\address{Department of Mathematics, Hakim Sabzevari University, Sabzevar, Iran}
\email{leila-sharifan@aut.ac.ir}
\author{Matteo Varbaro}
\address{Dipartimento di Matematica,
Universit\`a degli Studi di Genova, Italy}
\email{varbaro@dima.unige.it}
\subjclass[2000]{13D02}
\keywords{Betti tables; linear resolutions; componentwise linear ideals; extremal Betti numbers; strongly stable monomial ideals}
\date{{\small \today}}
\maketitle

\begin{abstract}
The purpose of this note is to introduce a multiplication on the set of homogeneous polynomials of fixed degree $d$, in a way to provide a duality theory between monomial ideals of $K[x_1,\ldots ,x_d]$ generated in degrees $\leq n$ and block stable ideals (a class of ideals containing the Borel fixed ones) of $K[x_1,\ldots ,x_n]$ generated in degree $d$. As a byproduct we give a new proof of the characterization of Betti tables of ideals with linear resolution given by Murai.
\end{abstract}

\section*{Introduction}
Minimal free resolutions of modules over a polynomial ring are a classical and fascinating subject. Let $P=K[x_1,\ldots ,x_n]$ denote the polynomial ring equipped with the standard grading in $n$ variables over a field $K$. For a $\ZZ$-graded finitely generated $P$-module $M$, we consider its minimal graded free resolution:
\[\ldots\rightarrow \bigoplus_{j\in \ZZ}P(-j)^{\beta_{i,j}(M)}\rightarrow \ldots \rightarrow \bigoplus_{j\in \ZZ}P(-j)^{\beta_{0,j}(M)}\rightarrow M \rightarrow 0,\]
where $P(k)$ denotes the $P$-module $P$ supplied with the new grading $P(k)_i=P_{k+i}$. Hilbert's Syzygy theorem guarantees that the resolution above is finite: more precisely $\beta_{i,j}(M)=0$ whenever $i > n$. The natural numbers $\beta_{i,j}=\beta_{i,j}(M)$ are numerical invariants of $M$, and they are called the {\it graded Betti numbers} of $M$. The coarser invariants $\beta_i=\beta_i(M)=\sum_{j\in \ZZ}\beta_{i,j}$ are called the {\it (total) Betti numbers} of $M$. We will refer to the matrix $(\beta_{i,i+j})$ as the {\it Betti table} of $M$:
\[
\left(\begin{array}{cccccc}
\vdots & \vdots & \vdots & \cdots & \cdots & \vdots \\
                     \beta_{0,d} & \beta_{1,1+d} & \beta_{2,2+d} & \cdots & \cdots & \beta_{n,n+d}\\
                     \vdots & \vdots & \vdots & \cdots & \cdots & \vdots \\
                   \end{array}\right).
\]
 It is a classical problem to inquire on the behavior of Betti tables, especially when $M=P/I$ (equivalently $M=I$) for a graded ideal $I\subset P$. Recently the point of view is substantially changed: Boij and S\"oderberg in \cite{BS} suggested to look at the set of Betti tables of modules $M$ up to rational numbers. Eisenbud and Schreyer confirmed this intuition in \cite{ES}, giving birth to a new theory that demonstrated extremely powerful and is rapidly developing.

In some directions the original problem of determining the exact (not only up to rationals) possible values of the Betti numbers of ideals has however been solved: For example, Murai characterized the Betti tables of {\it ideals with linear resolution} (i.e. with only one nonzero row in the Betti table) in \cite[Proposition 3.8]{Mu}, and Crupi and Utano in \cite{CU} and the three authors of this paper in \cite{HSV} gave (different in nature) characterizations of the possible {\it extremal Betti numbers} (nonzero top left corners in a block of zeroes in the Betti table) that a graded ideal may achieve. The proof of Murai makes use of the Kalai's stretching of a monomial ideal and the Eagon-Reiner theorem. In this note we aim to give an alternative proof of his result, introducing a structure of $K$-algebra on the set of the degree $d$ polynomials in a suitable way to yield a good duality theory between strongly stable ideals of $K[x_1,\ldots ,x_d]$ generated in degrees $\leq n$ and strongly stable ideals of $K[x_1,\ldots ,x_n]$ generated in degree $d$. Such a duality extends to all monomial ideals of $K[x_1,\ldots ,x_d]$ generated in degrees $\leq n$, the counterpart being certain monomial ideals of $K[x_1,\ldots ,x_n]$ generated in degree $d$, which we will call {\it block stable ideals}. Let us remark that this construction is completely elementary.

\section{Terminology}
Throughout we denote by $\NN$ the set of the natural numbers $\{0,1,2,\ldots\}$ and by $n$ a positive natural number. We will essentially work with the polynomial rings
\[S=K[x_i:i\in \NN]\]
and
\[P=K[x_1,\ldots ,x_n],\]
where the $x_i$'s are variables over a field $K$. The reason why we consider a polynomial ring in infinite variables is that it is more natural to deal with it in Section \ref{section*}, when we will define the $*$-operation. However, for the applications of the theory to the graded Betti numbers, $P$ will be considered. The following notions will be introduced just relatively to $S$, also if we will use them also for $P$.

\vspace{3mm}

The ring $S$ is graded on $\NN$, namely $S=\bigoplus_{d\in \NN}S_d$ where
\[S_d = \langle x_{i_1}x_{i_2}\cdots x_{i_d} \ : \ i_1\leq i_2 \leq \ldots \leq i_d \mbox{ \ are natural numbers}\rangle.\]
Given a monomial $u\in S_d$, with $d\geq 1$, we set:
\begin{equation}\label{maximumvariable}
m(u)=\max \{e\in \NN \ : \ x_e \ \mbox{ divides } \ u\}.
\end{equation}
A {\it monomial space} $V\subset S$ is a $K$-vector subspace of $S$ which has a $K$-basis consisting of monomials of $S$. If $V\subset S_d$, we will refer to the {\it complementary monomial space $V^c$ of $V$} as the $K$-vector space generated by the monomials of $S_d$ which are not in $V$. Given a monomial space $V\subset S$ and two natural numbers $i,d$, such that $d\geq 1$, we set:
\begin{equation*}
w_{i,d}(V)=|\{u\mbox{ monomials in }V\cap S_d \ \mbox{ and } \ m(u)=i\}|.
\end{equation*}
Without taking in consideration the degrees,
\begin{equation*}
w_i(V)=|\{u\mbox{ monomials in }V \ \mbox{ and } \ m(u)=i\}|.
\end{equation*}
We order the variables of $S$ by the rule
\[x_i > x_j \ \iff \ i<j, \]
so that $x_0>x_1>x_2>\ldots$. On the monomials, unless we explicitly say differently, we use a degree lexicographical order w.r.t. the above ordering of the variables. 
A monomial space $V\subset S$ is called {\it stable} if for any monomial $u\in V$, then $(u/x_{m(u)})\cdot x_i\in V$ for all $i< m(u)$. It is called {\it strongly stable} if for any monomial $u\in V$ and for each $j\in \NN$ such that $x_j$ divides $u$, then $(u/x_j)\cdot x_i\in V$ for all $i< j$. Obviously a strongly stable monomial space is stable.

\vspace{3mm}

The remaining definitions of this section will be given for $P$, since we do not need them for $S$. A monomial space $V\subset P$ is called {\it lexsegment} if, for all $d\in \NN$, there exists a monomial $u\in P_d$ such that
\[V\cap P_d=\langle v\in P_d:v\geq u\rangle.\]
Clearly, a lexsegment monomial space is strongly stable. The celebrated theorem of Macaulay explains when a lexsegment monomial space is an ideal. We recall that given a natural number $a$ and a positive integer $d$, the $d$th {\it Macaulay representation} of $a$ is the unique writing:
\[a=\sum_{i=1}^d\binom{k(i)}{i} \ \ \ \mbox{such that } \ k(d)>k(d-1)>\ldots>k(1)\geq 0,\]
see \cite[Lemma 4.2.6]{BH}. Then:
\[a^{\langle d\rangle}=\sum_{i=1}^d\binom{k(i)+1}{i+1}.\]
A numerical sequence $(h_i)_{i\in\NN}$ is called {\it O-sequence} if $h_0=1$ and $h_{d+1}\leq h_d^{\langle d\rangle}$ for all $d\geq 1$. (The reader should be careful because the definition of $O$-sequence depends on the numbering: A vector $(m_1,\ldots ,m_n)$ will be a $O$-sequence if $m_1=1$ and and $m_{i+1}\leq m_i^{\langle i-1\rangle}$ for all $i\geq 2$). The theorem of Macaulay (for example see \cite[Theorem 4.2.10]{BH}) says that, given a numerical sequence $(h_i)_{i\in\NN}$, the following are equivalent:
\begin{itemize}
\item[(i)] $(h_i)_{i\in\NN}$ is an {\it O}-sequence with $h_1\leq n$.
\item[(ii)] There is a homogeneous ideal $I\subset P$ such that $(h_i)_{i\in\NN}$ is the Hilbert function of $P/I$.
\item[(iii)] The lexsegment monomial space $L\subset P$ such that $L\cap P_d$ consists in the biggest $\binom{n+d-1}{d}-h_d$ monomials, is an ideal.
\end{itemize}

We already defined the Betti numbers of a $\ZZ$-graded $P$-module $M$ in the introduction. For an integer $d$, the $P$-module $M$ is said to have a {\it $d$-linear resolution} if $\beta_{i,j}(M)=0$ for every $j\neq i+d$; equivalently, if $\beta_i(M)=\beta_{i,i+d}(M)$ for all $i$. Notice that if $M$ has $d$-linear resolution, then it is generated in degree $d$. The $P$-module $M$ is said {\it componentwise linear} if $M_{\langle d\rangle}$ has $d$-linear resolution for all $d\in\ZZ$, where $M_{\langle d\rangle}$ means the $P$-submodule of $M$ generated by the elements of degree $d$ of $M$. It is not difficult to show that if $M$ has a linear resolution, then it is componentwise linear.

We introduce the following numerical invariants of a $\ZZ$-graded finitely generated $P$-module $M$: For all $i=1,\ldots ,n+1$ and $d\in\ZZ$:
\begin{equation}\label{defmidgen}
m_{i,d}(M)=\sum_{k=0}^n(-1)^{k-i+1}\binom{k}{i-1}\beta_{k,k+d}(M).
\end{equation}
The following lemma shows that to know the $m_{i,d}(M)$'s is equivalent to know the Betti table of $M$.

\begin{lemma}\label{m_ib_i}
Let $M$ be a $\ZZ$-graded finitely generated $P$-module. Then:
\begin{equation}\label{eliker}
\beta_{i,i+d}(M)=\sum_{k=i}^{n+1}\binom{k-1}{i}m_{k,d}(M).
\end{equation}
\end{lemma}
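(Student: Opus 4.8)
The statement is purely a combinatorial inversion: the two families $(\beta_{i,i+d}(M))_i$ and $(m_{i,d}(M))_i$ are related by a pair of signed binomial transforms, and the plan is to show that these transforms are mutually inverse. I would fix the index $d$ once and for all and, to lighten the notation, abbreviate $b_k=\beta_{k,k+d}(M)$ and $m_i=m_{i,d}(M)$, so that the definition \eqref{defmidgen} reads $m_i=\sum_{k=0}^n(-1)^{k-i+1}\binom{k}{i-1}b_k$ while the goal \eqref{eliker} becomes $b_i=\sum_{k=i}^{n+1}\binom{k-1}{i}m_k$. Everything in sight is a finite sum of integers, so there are no convergence issues to worry about.

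First I would substitute the definition of $m_k$ into the right-hand side of \eqref{eliker} and interchange the two summations, collecting the coefficient of each $b_\ell$. After replacing the summation variable $k$ by $j=k-1$ and simplifying the sign via $(-1)^{\ell-k+1}=(-1)^{\ell-j}$, the claim reduces to the single identity
\[
\sum_{j=i}^{\ell}(-1)^{\ell-j}\binom{\ell}{j}\binom{j}{i}=\delta_{i,\ell},
\]
after which $\sum_{\ell} b_\ell\,\delta_{i,\ell}=b_i$ yields \eqref{eliker}. At this point I would record that the nominal term $k=i$ in \eqref{eliker} contributes nothing, because $\binom{i-1}{i}=0$; hence the effective range is $i\le j\le\ell$, which is empty (so the sum is zero) precisely when $\ell<i$, as required.

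To evaluate the inner sum I would invoke the subset-of-a-subset identity $\binom{\ell}{j}\binom{j}{i}=\binom{\ell}{i}\binom{\ell-i}{j-i}$, pull the factor $\binom{\ell}{i}$ out of the sum, and set $p=j-i$ to obtain
\[
\binom{\ell}{i}(-1)^{\ell-i}\sum_{p=0}^{\ell-i}(-1)^{p}\binom{\ell-i}{p}=\binom{\ell}{i}(-1)^{\ell-i}(1-1)^{\ell-i}.
\]
Since $(1-1)^{\ell-i}$ vanishes unless $\ell=i$, in which case the whole expression collapses to $\binom{i}{i}=1$, the inner sum is exactly $\delta_{i,\ell}$, which closes the argument.

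The computation is entirely elementary, so I do not expect any serious obstacle. The only points that demand care are the bookkeeping of the summation ranges — in particular the vanishing of the $k=i$ term and the fact that the order of summation may be exchanged freely because all sums are finite — together with the recognition that the two standard binomial identities above are precisely what is needed. One could alternatively phrase the entire proof as the assertion that the lower-triangular integer matrices with entries $(-1)^{k-i+1}\binom{k}{i-1}$ and $\binom{k-1}{i}$ are inverse to one another, but the direct verification sketched above seems the most transparent route.
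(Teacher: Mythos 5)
Your proof is correct, and it takes a (mildly but genuinely) different route from the paper's. Your computation is sound at every step: the interchange of the two finite sums, the observation that the nominal $k=i$ term of \eqref{eliker} vanishes because $\binom{i-1}{i}=0$, the reduction to the orthogonality identity $\sum_{j=i}^{\ell}(-1)^{\ell-j}\binom{\ell}{j}\binom{j}{i}=\delta_{i,\ell}$, and its evaluation via the subset-of-a-subset identity together with $(1-1)^{\ell-i}$ (including the empty-range case $\ell<i$). The paper instead avoids any explicit binomial identity: it packages the definition \eqref{defmidgen} as the single polynomial identity $\sum_{k=1}^{n+1}m_{k,d}\,t^{k-1}=\sum_{i=0}^{n}\beta_{i,i+d}\,(t-1)^{i}$ in $\ZZ[t]$, substitutes $t=s+1$ to obtain $\sum_{k=1}^{n+1}m_{k,d}\,(s+1)^{k-1}=\sum_{i=0}^{n}\beta_{i,i+d}\,s^{i}$, and reads off \eqref{eliker} by comparing coefficients of $s^{i}$. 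The substitution $t\mapsto s+1$ performs in one stroke exactly the computation you do by hand: your identity $\sum_{j}(-1)^{\ell-j}\binom{\ell}{j}\binom{j}{i}=\delta_{i,\ell}$ is precisely the assertion that the change of basis $\{t^{k}\}\leftrightarrow\{(t-1)^{k}\}$ of $\ZZ[t]$ is inverted by $t\mapsto t+1$, and your closing remark about the two lower-triangular matrices being mutually inverse is the paper's proof restated in matrix language. What your version buys is complete explicitness and self-containedness (no generating-function reformulation needed); what the paper's buys is brevity and the conceptual point that $(\beta_{i,i+d})$ and $(m_{k,d})$ are the coefficient vectors of one and the same polynomial in two bases, which makes the inversion automatic rather than verified.
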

\begin{proof}
Set $m_{k,d}=m_{k,d}(M)$ and $\beta_{i,j}=\beta_{i,j}(M)$. By the definition of the $m_{k,d}$'s we have the following identity in $\ZZ[t]$:
\[\sum_{k=1}^{n+1} m_{k,d}t^{k-1}=\sum_{i=0}^n\beta_{i,i+d}(t-1)^i.\]
Replacing $t$ by $s+1$, we get the identity of $\ZZ[s]$
\[\sum_{k=1}^{n+1}m_{k,d}(s+1)^{k-1}=\sum_{i=0}^n\beta_{i,i+d}s^i,\]
that implies the lemma.
\end{proof}

Let us define also the coarser invariants:
\begin{equation}\label{defmigen}
m_i(M)=\sum_{d\in\ZZ}m_{i,d}(M) \ \ \ \forall \ i=1,\ldots ,n+1.
\end{equation}
If $M=I$ is a homogeneous ideal of $P$, notice that $m_{i,d}=0$ if $i=n+1$ or $d<0$. We say that a monomial ideal $I\subset P$ is stable (strongly stable) (lexsegment) if the underlying monomial space is. By $G(I)$, we will denote the unique minimal set of monomial generators of $I$.
If $I$ is a stable monomial ideal, we have the following nice interpretation by the Eliahou-Kervaire formula \cite{EK} (see also \cite[Corollary 7.2.3]{HH2}):
\begin{eqnarray}\label{defmid}
m_{i,d}(I)=w_{i,d}(\langle G(I) \rangle)=|\{u\mbox{ monomials in }G(I)\cap P_d \ \mbox{ and } \ m(u)=i\}|\\
m_{i}(I)=w_{i}(\langle G(I) \rangle)=|\{u\mbox{ monomials in }G(I) \ \mbox{ and } \ m(u)=i\}|.\nonumber
\end{eqnarray}
From Lemma \ref{m_ib_i} and \eqref{defmid} follows that a stable ideal generated in degree $d$ has a $d$-linear resolution. Furthermore, if $I$ is a stable ideal, then $I_{\langle d\rangle}$ is stable for all natural numbers $d$. So any stable ideal is componentwise linear.

When $M=I$ is a stable monomial ideal we will consider \eqref{defmid} the definition of the $m_{i,d}$'s, and we will refer to \eqref{eliker} as the Eliahou-Kervaire formula.

\section{The $*$-operation on monomials and strongly stable ideals}\label{section*}

We are going to give a structure of associative commutative $K$-algebra to the $K$-vector space $S_d$, in the following way: Given two monomials $u$ and $v$ in $S_d$, we write them as $u=x_{i_1}x_{i_2}\cdots x_{i_d}$ with $i_1 \leq i_2 \leq \ldots \leq i_d$ and $v=x_{j_1}x_{j_2}\cdots x_{j_d}$ with $j_1 \leq j_2 \leq \ldots \leq j_d$. Then we define their product as
\begin{equation*}
u*v = x_{i_1+j_1}x_{i_2+j_2}\cdots x_{i_d+j_d}.
\end{equation*}
We can extend $*$ to the whole $S_d$ by $K$-linearity. Clearly, $*$ is associative and commutative. We will denote by $\S_d$ the $K$-vector space $S_d$ supplied with such an algebra structure. Actually $\S_d$ has a natural graded structure: In fact, we can write $\S_d = \oplus_{e\in \NN} (\S_d)_e$ where
\begin{equation*}
(\S_d)_e =\langle u\mbox{ monomial of } S_d \ \mbox{ and } \ m(u)=e\rangle.
\end{equation*}
Notice that $(\S_d)_0=\langle x_0^d\rangle \cong K$ and that $(\S_d)_e$ is a finite dimensional $K$-vector space. Therefore, $\S_d$ is actually a positively graded $K$-algebra. Moreover, if $u=x_0^{a_0}\cdots x_e^{a_e}\in \S_d$, with $a_e\neq 0$ and $e\geq 1$. Then
\[u=(x_0^{a_0}x_1^{a_1+\ldots + a_e})*(x_0^{a_0+a_1}x_1^{a_2+\ldots +a_e})*\ldots *(x_0^{a_0+\ldots +a_{e-1}}x_1^{a_e}),\]
so $\S_d$ is a standard graded $K$-algebra, that is $\S_d=K[(\S_d)_1]$. Particularly, $\S_d$ is Noetherian. Notice that $(\S_d)_1$ is a $K$-vector space of dimension $d$, namely:
\[(\S_d)_1=\langle x_0^{d-1}x_1,x_0^{d-2}x_1^2,\ldots ,x_1^d\rangle.\]
Actually, we are going to prove that $\S_d$ is a polynomial ring in $d$ variables over $K$.

\begin{prop}\label{structure}
The ring $\S_d$ is isomorphic, as a graded $K$-algebra, to the polynomial ring in $d$ variables over $K$.
\end{prop}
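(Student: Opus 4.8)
The plan is to realize $\S_d$ as the monoid algebra of the free commutative monoid $\NN^d$, from which the statement follows at once. First I would identify the monomials of $S_d$ with weakly increasing tuples: a monomial $u=x_{i_1}x_{i_2}\cdots x_{i_d}$ with $i_1\leq i_2\leq\cdots\leq i_d$ corresponds to $(i_1,\ldots,i_d)$, and I would let $T\subset\NN^d$ be the set of all such weakly increasing tuples. The key observation is that under this identification the operation $*$ is exactly componentwise addition: since $i_1\leq\cdots\leq i_d$ and $j_1\leq\cdots\leq j_d$ force $i_1+j_1\leq\cdots\leq i_d+j_d$, the tuple $(i_1+j_1,\ldots,i_d+j_d)$ is again weakly increasing, so $u*v$ is well defined and corresponds to adding the tuples. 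Thus the set of monomials of $S_d$ with $*$ is isomorphic to the additive monoid $(T,+)$, a submonoid of $(\NN^d,+)$ whose identity $(0,\ldots,0)$ is the monomial $x_0^d$.

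Next I would show $(T,+)\cong(\NN^d,+)$ via the \emph{difference map}
\[
\varphi\colon (i_1,i_2,\ldots,i_d)\longmapsto (i_1,\,i_2-i_1,\,\ldots,\,i_d-i_{d-1}).
\]
This lands in $\NN^d$ precisely because the tuple is weakly increasing, and its two-sided inverse is the partial-sum map $(a_1,\ldots,a_d)\mapsto(a_1,\,a_1+a_2,\,\ldots,\,a_1+\cdots+a_d)$, whose image is exactly $T$. Because the $k$-th difference of a sum of tuples is the sum of their $k$-th differences, $\varphi$ is a homomorphism, hence an isomorphism of monoids. Passing to monoid algebras over $K$ yields $\S_d\cong K[T]\cong K[\NN^d]=K[y_1,\ldots,y_d]$, the polynomial ring in $d$ variables.

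To upgrade this to a graded isomorphism I would track the grading through $\varphi$. On $\S_d$ the degree of $u$ is $m(u)=i_d$, while on $\NN^d$ the natural grading is total degree; and $i_d=(i_1)+(i_2-i_1)+\cdots+(i_d-i_{d-1})=a_1+\cdots+a_d$, so $\varphi$ carries $m$ to total degree. In particular the standard basis vector $e_k\in\NN^d$ corresponds to the tuple $(0,\ldots,0,1,\ldots,1)$ with $k-1$ zeros, i.e.\ to the monomial $x_0^{k-1}x_1^{d-k+1}$; these are precisely the generators of $(\S_d)_1$ listed in the text, each of degree $1$. Hence $\S_d\cong K[y_1,\ldots,y_d]$ as graded $K$-algebras.

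There is no deep obstacle here: the entire content is the dictionary ``sorted tuple $\leftrightarrow$ difference vector.'' The one point deserving genuine care is checking well-definedness of $*$ together with the homomorphism property of $\varphi$, namely that componentwise addition preserves the weakly increasing condition and commutes with taking successive differences; once this routine bookkeeping is in place the identification $\S_d=K[\NN^d]$ is immediate. As a consistency check one may note that the monoid description gives $\dim_K(\S_d)_e=\binom{e+d-1}{d-1}$, matching the Hilbert function of $K[y_1,\ldots,y_d]$, which together with the paper's observation that $\S_d$ is standard graded with $\dim_K(\S_d)_1=d$ forces the $d$ generators to be algebraically independent.
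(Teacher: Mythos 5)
Your proof is correct, but it is packaged differently from the paper's. The paper argues in the opposite direction: it defines a graded $K$-algebra surjection $\phi\colon K[y_1,\ldots,y_d]\to \S_d$, $\phi(y_i)=x_0^{i-1}x_1^{d+1-i}$ --- surjectivity resting on the factorization, given just before the proposition, of any monomial of $\S_d$ into degree-one elements, which shows $\S_d=K[(\S_d)_1]$ --- and then deduces injectivity by a degreewise dimension count, exhibiting a bijection between the monomials of $(\S_d)_e$ and those of $K[y_1,\ldots,y_d]_e$. That bijection is precisely your dictionary: the paper's inverse map $\psi(x_0^{a_0}\cdots x_e^{a_e})=y_{a_0+1}y_{a_0+a_1+1}\cdots y_{a_0+\cdots+a_{e-1}+1}$ of \eqref{imagepsi} is the sorted-tuple/difference-vector correspondence, and your images $e_k\mapsto x_0^{k-1}x_1^{d-k+1}$ agree with \eqref{identification}, so the two isomorphisms coincide. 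What your monoid-algebra framing buys: since the monomials of $S_d$ form a $K$-basis of $\S_d$ closed under $*$ with unit $x_0^d$, the algebra is literally the monoid algebra of $(T,+)$, the difference map is an isomorphism of monoids $T\cong\NN^d$ (inverse given by partial sums), and multiplicativity in all degrees, associativity, and the grading statement (via the telescoping identity $i_d=a_1+\cdots+a_d$) all follow at once, with no need for the standard-gradedness input or the dimension count. What the paper's route buys is that the explicit homomorphisms $\phi$ and $\psi$, and the degree-one factorization, are set up once and then reused throughout (e.g.\ in Lemma \ref{sstableinM} and Theorem \ref{characterization}), whereas your argument establishes the same maps as a byproduct of the monoid isomorphism. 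Both arguments are elementary and complete; yours is arguably the more conceptual, the paper's the more hands-on.
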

\begin{proof}
Let $K[y_1,\ldots ,y_d]$ be the polynomial ring in $d$ variables over $K$. Of course there is a graded surjective homomorphism of $K$-algebras $\phi$ from $K[y_1,\ldots ,y_d]$ to $\S_d$, by extending the rule:
\begin{equation}\label{identification}
\phi(y_i)=x_0^{i-1}x_1^{d+1-i}.
\end{equation}
In order to show that $\phi$ is an isomorphism, it suffices to exhibit an isomorphism of $K$-vector spaces between the graded components of $\S_d$ and $K[y_1,\ldots ,y_d]$. To this aim pick a monomial $u\in (\S_d)_e$:
\[u=x_0^{a_0}\cdots x_e^{a_e}, \ \ \ a_i\in \NN, \ a_e>0 \ \mbox{ and } \ \sum_{i=0}^ea_i=d.\]
To such a monomial we associate the monomial of $K[y_1,\ldots ,y_d]_e$
\[y_{a_0+1}y_{a_0+a_1+1}\cdots y_{a_0+\ldots +a_{e-1}+1}.\]
It is easy to see that the above application is one-to-one, so the proposition follows.
\end{proof}

\begin{remark}
For the sequel it is useful to familiarize with the map $\phi$. For instance, one can easily verify that:
\begin{equation}\label{imagephi}
\phi(y_1^{b_1}y_2^{b_2}\cdots y_d^{b_d})=x_{b_1}x_{b_1+b_2}\cdots x_{b_1+\ldots + b_d}.
\end{equation}
Proposition \ref{structure} guarantees that $\phi$ has an inverse, that we will denote by $\psi=\phi^{-1}:\S_d\rightarrow K[y_1,\ldots ,y_d]$. As one can show:
\begin{equation}\label{imagepsi}
\psi(x_0^{a_0}x_1^{a_1}\cdots x_e^{a_e})=y_{a_0+1}y_{a_0+a_1+1}\cdots y_{a_0+\ldots +a_{e-1}+1}.
\end{equation}
\end{remark}

Given a monomial space $V$ of course we have an isomorphism of $K$-vector spaces
\[V \cong \S_d/V^c.\]
However in general the above isomorphism does not yield a structure of $K$-algebra to $V$, because $V^c$ may be not an ideal of $\S_d$.
We are interested to characterize those monomials spaces $V\subset S_d$ such that $V^c$ is an ideal of $\S_d$. For what follows it is convenient to introduce the following definition.

\begin{definition}\label{def:sstable}
Let $V\subset S$ be a monomial space. We will call it {\it block stable} if for any $u=x_0^{a_0}\cdots x_e^{a_e}\in V$ and for any $i=1,\ldots ,e$, we have that
\[\frac{u}{x_i^{a_i}\cdots x_e^{a_e}}\cdot x_{i-1}^{a_i}\cdots x_{e-1}^{a_e}\in V.\]
\end{definition}

\begin{remark}\label{stablenotdouble}
Notice that a strongly stable monomial space is also stable and block stable. On the other side block stable monomial spaces might be not stable (it is enough to consider $\langle x_0^2, \ x_1^2\rangle$). There are also stable monomial spaces which are not block stable: Consider the monomial space:
\[V=\langle x_0^3, \ x_0^2x_1, \ x_0x_1^2, \ x_0x_1x_2, \ x_0x_1x_3\rangle \subset S_3.\]
It turns out that $V$ is stable, but not block stable, because
\[\frac{x_0x_1x_3}{x_1x_3}\cdot x_0x_2=x_0^2x_2\notin V.\]
Finally, the monomial space $\langle x_0^3, \ x_0^2x_1, \ x_0x_1^2, \ x_0x_1x_2\rangle \subset S_3$ is both stable and block stable, but is not strongly stable.
\end{remark}

\begin{lemma}\label{ideals}
Let $V\subset S_d$ be a monomial space. Then $V$ is block stable if and only if $V^c$ is an ideal of $\S_d$.
\end{lemma}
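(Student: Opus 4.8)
The plan is to transport everything through the isomorphism $\psi=\phi^{-1}\colon \S_d\to K[y_1,\ldots,y_d]$ of Proposition~\ref{structure}, which carries monomials of $S_d$ bijectively onto monomials of $K[y_1,\ldots,y_d]$ and, being a graded $K$-algebra isomorphism, makes $V^c$ an ideal of $\S_d$ exactly when $\psi(V^c)$ is a monomial ideal of $K[y_1,\ldots,y_d]$. Now $\psi(V^c)$ is, as a set of monomials, the complement of $\psi(V)$ in $K[y_1,\ldots,y_d]$, so the condition ``$V^c$ is an ideal'' translates into ``$\psi(V)$ is closed under division by variables'', i.e.\ for every monomial $w\in\psi(V)$ and every variable $y_j$ dividing $w$ one has $w/y_j\in\psi(V)$. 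This uses only the standard duality that a monomial space $W$ is an ideal if and only if the complementary set of monomials is closed under taking divisors, together with the observation that closure under division by single variables already forces closure under all divisors.

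The heart of the argument is then to recognize that, under $\psi$, the block-stable moves are precisely these single-variable divisions. So I would fix a monomial $u=x_0^{a_0}\cdots x_e^{a_e}\in S_d$ with $a_e>0$ and write, using \eqref{imagepsi}, $\psi(u)=y_{s_1}\cdots y_{s_e}$ with $s_k=a_0+\cdots+a_{k-1}+1$ for $k=1,\ldots,e$, so that $s_1\le\cdots\le s_e$. Next I would compute the exponent vector of the monomial
\[u'=\frac{u}{x_i^{a_i}\cdots x_e^{a_e}}\cdot x_{i-1}^{a_i}\cdots x_{e-1}^{a_e}\]
produced by the block-stable operation at an index $i\in\{1,\ldots,e\}$, finding that it equals $x_0^{a_0}\cdots x_{i-2}^{a_{i-2}}x_{i-1}^{a_{i-1}+a_i}x_i^{a_{i+1}}\cdots x_{e-1}^{a_e}$, so in particular $m(u')=e-1$. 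Feeding its partial sums back into \eqref{imagepsi}, the claim to verify is that the defining sequence of $u'$ is obtained from $(s_1,\ldots,s_e)$ by deleting its $i$-th entry, that is, $\psi(u')=\psi(u)/y_{s_i}$. This is the one genuine computation, and I expect the bookkeeping of the shifted partial sums $a_0+\cdots+a_{k-1}$ (checking $s'_k=s_k$ for $k<i$ and $s'_k=s_{k+1}$ for $k\ge i$) to be the main, though entirely elementary, obstacle.

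Granting this, the two conditions match term by term. As $i$ runs over $\{1,\ldots,e\}$ the variables $y_{s_i}$ run over exactly the variables dividing $\psi(u)$, so the statement ``$V$ is block stable'' says precisely that $\psi(V)$ is closed under dividing each of its monomials by any variable occurring in it. By the duality recalled in the first paragraph this is equivalent to $\psi(V^c)$ being an ideal of $K[y_1,\ldots,y_d]$, hence, $\psi$ being a graded $K$-algebra isomorphism, to $V^c$ being an ideal of $\S_d$. Since this is a genuine equivalence, both implications of the lemma follow simultaneously and no separate converse argument is required.
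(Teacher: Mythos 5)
Your proof is correct, and it reaches the equivalence by a genuinely different (though closely related) route from the paper's. The paper never passes through $\psi$: it argues directly with the $*$-product, by contradiction in each direction. For ``only if'' it tests closure of $V^c$ under $*$-multiplication by the degree-one monomials $x_0^ix_1^{d-i}$ (implicitly using that $\S_d$ is standard graded, so these multipliers suffice) and pulls a bad product back into $V$ by a block-stable move; for ``if'' it inverts a failed block move $w$ of $u=x_0^{a_0}\cdots x_e^{a_e}$ via the identity $u=w*\bigl(x_0^{a_0+\cdots+a_{i-1}}x_1^{a_i+\cdots+a_e}\bigr)$. Your central computation $\psi(u')=\psi(u)/y_{s_i}$, with $s_i=a_0+\cdots+a_{i-1}+1$, is exactly that identity read through the isomorphism of Proposition~\ref{structure}, since $\phi(y_{s_i})=x_0^{s_i-1}x_1^{d+1-s_i}$; and your partial-sum bookkeeping does check out ($s'_k=s_k$ for $k<i$ and $s'_k=s_{k+1}$ for $k\geq i$, because the move merges the consecutive steps $a_{i-1}$ and $a_i$ into $a_{i-1}+a_i$ and shifts the remaining exponents). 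What your packaging buys: the standard fact that a monomial space is an ideal iff the complementary set of monomials is closed under division by variables yields both implications in one stroke, with no contradiction arguments, and the degenerate cases (repeated values $s_i=s_{i+1}$ when $a_i=0$, or $e=1$) are absorbed automatically, since block moves producing the same monomial correspond to dividing by the same variable. What the paper's version buys: it is self-contained at the level of $*$, needing neither \eqref{imagepsi} nor the divisor duality, at the cost of two separate arguments. Incidentally, your exponent $s_i-1=a_0+\cdots+a_{i-1}$ is the correct one; the exponent $a_1+\cdots+a_{i-1}$ displayed in the paper's ``if'' part omits $a_0$ (a typo, as a degree count shows), so your computation in fact corrects a small slip in the published argument.
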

\begin{proof}
``Only if"-part. Consider a monomial $u\in V^c$. By contradiction there is an $i\in \{1,\ldots ,d-1\}$ such that
\[w=u*(x_0^ix_1^{d-i})\notin V^c.\]
If $u=x_{p_1}\cdots x_{p_d}$ with $ p_1 \leq \ldots \leq p_d$, then
\[w=x_{p_1}\cdots x_{p_i}\cdot x_{p_{i+1}+1}\cdots x_{p_d+1}.\]
Since $V$ is block stable and $w$ is a monomial of $V$, then
\[u=\frac{w}{x_{p_{i+1}+1}\cdots x_{p_d+1}}\cdot x_{p_{i+1}}\cdots x_{p_d}\in V,\]
a contradiction.

``If"-part. Pick $u=x_0^{a_0}\cdots x_e^{a_e} \in V$. By contradiction there is $i\in \{1,\ldots ,e\}$ such that
\[w=\frac{u}{x_i^{a_i}\cdots x_e^{a_e}}\cdot x_{i-1}^{a_i}\cdots x_{e-1}^{a_e}\notin V.\]
Since $V^c$ is an ideal of $\S_d$ and $w\in V^c$, we have
\[u=w*(x_0^{a_1+\ldots +a_{i-1}}x_1^{a_i+\ldots +a_e})\in V^c.\]
This contradicts the fact that we took $u\in V$.
\end{proof}

The following corollary, essentially, is why we introduced $\S_d$.

\begin{corollary}\label{bound}
Let $(w_i)_{i\in \NN}$ be a sequence of natural numbers. If there exists a strongly stable monomial space $V\subset S_d$ (actually it is enough that $V$ is block stable) such that $w_i(V)=w_i$ for any $i\in \NN$, then $(w_i)_{i\in \NN}$ is an {\it O}-sequence such that $w_1\leq d$.
\end{corollary}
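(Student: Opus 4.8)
The plan is to recognize the sequence $(w_i)_{i\in\NN}$ as the Hilbert function of an explicit standard graded quotient of a polynomial ring in $d$ variables, and then to invoke Macaulay's theorem. At the outset I would note that we may assume $V\neq 0$: the zero space forces $w_0=0$ and hence cannot produce an O-sequence, so it is implicitly excluded. Since a strongly stable monomial space is in particular block stable (Remark~\ref{stablenotdouble}), the proof will only use block stability, which is the content of the parenthetical strengthening. By Lemma~\ref{ideals}, block stability of $V$ is equivalent to $V^c$ being an ideal of $\S_d$, so that $\S_d/V^c$ is a bona fide quotient ring of $\S_d$.

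The key step is to compute the Hilbert function of $\S_d/V^c$ with respect to the grading of $\S_d$ by $m(\cdot)$. In degree $i$ the space $(\S_d)_i$ is spanned by the monomials $u\in S_d$ with $m(u)=i$, while $(V^c)_i$ is spanned by exactly those such monomials that do \emph{not} lie in $V$. Hence $\dim_K(\S_d/V^c)_i$ equals the number of monomials $u\in V$ with $m(u)=i$, which is precisely $w_i(V)=w_i$. Thus $(w_i)_{i\in\NN}$ is the Hilbert function of the standard graded $K$-algebra $\S_d/V^c$.

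Now I would invoke Proposition~\ref{structure}: since $\S_d\cong K[y_1,\ldots,y_d]$ as graded $K$-algebras, $\S_d/V^c$ is a homogeneous quotient of a polynomial ring in $d$ variables. By the implication (ii)$\Rightarrow$(i) of Macaulay's theorem (\cite[Theorem 4.2.10]{BH}) applied with $d$ in the role of $n$, its Hilbert function is an O-sequence with first entry at most $d$. This yields at once that $(w_i)$ is an O-sequence and that $w_1\leq d$; the bound $w_1\leq d$ is in any case transparent directly, since $\dim_K(\S_d)_1=d$ forces $w_1=\dim_K(\S_d/V^c)_1\leq d$.

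The only genuinely delicate point, and the one I would check carefully, is the normalization $w_0=1$ required by the definition of an O-sequence for the nonzero space $V$. For this I would verify that $x_0^d\in V$: starting from any $u=x_0^{a_0}\cdots x_e^{a_e}\in V$ and applying the block-stability operation with $i=1$ sends $u$ to $x_0^{a_0+a_1}x_1^{a_2}\cdots x_{e-1}^{a_e}$, lowering $m$ by one at each application, so after $e$ steps we reach $x_0^d\in V$; as $x_0^d$ is the unique monomial with $m=0$, this gives $w_0=1$ and ensures $\S_d/V^c$ is a nonzero standard graded algebra with $(\S_d/V^c)_0=K$. Everything else is the routine bookkeeping of identifying $V\cong\S_d/V^c$ as graded vector spaces, so that the substance of the corollary is carried entirely by Lemma~\ref{ideals}, Proposition~\ref{structure} and Macaulay's theorem.
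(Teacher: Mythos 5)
Your proof is correct and takes essentially the same route as the paper's: Lemma~\ref{ideals} makes $V^c$ an ideal of $\S_d$, Proposition~\ref{structure} identifies $\S_d/V^c$ as a standard graded quotient of a polynomial ring in $d$ variables with Hilbert function $(w_i)$, and Macaulay's theorem concludes. The only difference is that you carefully justify what the paper dismisses as ``clear'' --- the implicit assumption $V\neq 0$ and the derivation of $x_0^d\in V$ (hence $w_0=1$) by iterating the block-stability operation --- and both of these verifications are accurate.
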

\begin{proof}
That $w_0=1$ and $w_1\leq d$ is clear. By Lemma \ref{ideals} $V^c$ is an ideal of $\S_d$. So, Proposition \ref{structure} implies that $\S_d/V^c$ is a standard graded $K$-algebra. Clearly we have
\[\HF_{\S_d/V^c}(i)=w_i(V)=w_i \ \ \ \forall \ i\in\NN,\]
($\HF$ denotes the Hilbert function) so we get the conclusion by the theorem of Macaulay.
\end{proof}

The above corollary can be reversed. To this aim we need to understand the meaning of ``strongly stable" in $\S_d$. By Proposition \ref{structure} $\S_d\cong K[y_1,\ldots ,y_d]$, so we already have a notion of ``strongly stable" in $\S_d$. However, we want to describe it in terms of the multiplication $*$.

\begin{lemma}\label{sstableinM}
Let $W$ be a monomial space of $K[y_1,\ldots ,y_d]$. We recall the isomorphism $\phi : K[y_1,\ldots ,y_d]\rightarrow \S_d$ of \eqref{identification}. The following are equivalent:
\begin{itemize}
\item[{\em (i)}] $W$ is a strongly stable monomial space.
\item[{\em (ii)}] If $x_0^{a_0}\cdots x_e^{a_e}\in \phi(W)$ with $a_e>0$, then $x_0^{a_0}\cdots  x_i^{a_i-1}\cdot x_{i+1}^{a_{i+1}+1}\cdots x_e^{a_e}\in \phi(W)$ for all $i\in\{0,\ldots ,e-1\}$ such that $a_i>0$.
\end{itemize}
\end{lemma}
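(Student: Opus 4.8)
The plan is to prove the equivalence by translating the combinatorial condition defining ``strongly stable'' on $W\subset K[y_1,\ldots,y_d]$ directly through the isomorphism $\phi$, using the explicit formula \eqref{imagephi}. Recall that $W$ is strongly stable iff for every monomial $w\in W$ and every $y_j$ dividing $w$, we have $(w/y_j)\cdot y_i\in W$ for all $i<j$. Since $\phi$ is a bijection on monomials, condition (i) is equivalent to the statement that $\phi(W)$ is closed under the images of these elementary ``downward'' substitutions $y_j\mapsto y_i$. So first I would fix a monomial $w=y_1^{b_1}\cdots y_d^{b_d}\in W$ and compute, via \eqref{imagephi}, that $\phi(w)=x_{b_1}x_{b_1+b_2}\cdots x_{b_1+\cdots+b_d}$, and then work out what the single substitution $y_j\to y_{j-1}$ (it suffices to treat adjacent swaps, since the general case $i<j$ is obtained by composing these) does to the exponent vector on the $x$-side.

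Second, I would make the bridge between the two exponent encodings precise. On the $\S_d$-side a monomial is written $u=x_0^{a_0}\cdots x_e^{a_e}$ with $\sum a_k=d$ and $a_e>0$, and by \eqref{imagepsi} its preimage is $\psi(u)=y_{a_0+1}y_{a_0+a_1+1}\cdots y_{a_0+\cdots+a_{e-1}+1}$. Thus the $y$-indices appearing in $\psi(u)$ are exactly the partial sums $s_i=a_0+\cdots+a_{i-1}+1$ for $i=1,\ldots,e$, and the multiplicity with which a given value $j+1$ occurs as a $y$-index equals the number of consecutive zero exponents $a_k$ just before the corresponding block—equivalently, $b_{j+1}=\#\{i:s_i=j+1\}$. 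The key computation is then to verify that the operation described in (ii), namely replacing $x_0^{a_0}\cdots x_i^{a_i}\cdots x_e^{a_e}$ (with $a_i>0$) by $x_0^{a_0}\cdots x_i^{a_i-1}x_{i+1}^{a_{i+1}+1}\cdots x_e^{a_e}$, corresponds under $\psi$ to exactly one elementary strongly stable move on the $y$-side: moving one unit of index from some $y_j$ down to $y_{j-1}$. I expect that shifting one unit of the $a_i$-block to the $a_{i+1}$-block increases one of the partial sums $s_\ell$ by exactly one while leaving the others fixed, which is precisely the move $y_{s_\ell}\to y_{s_\ell+1}$ read from the higher-index monomial back to the lower one; reversing the arrow gives the strongly stable condition in the required direction.

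The main obstacle, and the step deserving the most care, is bookkeeping the correspondence between the two substitution operations without sign or off-by-one errors, since the $x$-variables are ordered by $x_0>x_1>\cdots$ (opposite to the natural index order) while the $y$-variables carry the usual order, and the exponent-to-partial-sum dictionary reverses the roles of ``block index'' and ``variable index.'' I would therefore argue that it suffices to check the two conditions only for adjacent moves: on the $y$-side strong stability is generated by the swaps $y_j\to y_{j-1}$, and on the $\S_d$-side the operation in (ii) with a single unit moved between neighbouring blocks $x_i,x_{i+1}$ generates all the others by iteration and composition. Reducing to adjacent moves on both sides lets me match them one-for-one. Once the dictionary is pinned down, the equivalence is immediate: (i) says $W$ is closed under all adjacent downward swaps, which under $\phi$ becomes exactly the closure property in (ii), and conversely. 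I would close by noting that the finiteness of exponents and the constraint $\sum a_k=d$ guarantee every such move stays within $S_d$, so no degree issues arise.
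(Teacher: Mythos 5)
Your proposal is correct and follows essentially the same route as the paper's own proof: translate everything through $\phi$ and $\psi$ via \eqref{imagephi} and \eqref{imagepsi}, identify the move in (ii) with a single adjacent strongly stable move on the $y$-side, and reduce the general exchange $y_p \to y_q$ to the adjacent case $q=p-1$ by composing such moves (the paper's ``of course we can suppose $q=p-1$''). The only fix needed is the sign in your key computation: passing from $x_0^{a_0}\cdots x_i^{a_i}\cdots x_e^{a_e}$ to $x_0^{a_0}\cdots x_i^{a_i-1}x_{i+1}^{a_{i+1}+1}\cdots x_e^{a_e}$ \emph{decreases} the partial sum $s_{i+1}=a_0+\cdots +a_i+1$ by one and fixes all the other $s_\ell$, so under $\psi$ it is exactly the strongly stable move $y_{s_{i+1}}\mapsto y_{s_{i+1}-1}$ with no arrow-reversal needed, the hypothesis $a_i>0$ guaranteeing $s_{i+1}-1\geq s_i$ so that the resulting monomial is legitimate.
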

\begin{proof}
(i) $\implies$ (ii). If $u=x_0^{a_0}x_1^{a_1}\cdots x_e^{a_e}\in \phi(W)$ with $a_e>0$, then
\[\psi(u)=y_{a_0+1}y_{a_0+a_1+1}\cdots y_{a_0+\ldots +a_{e-1}+1}\in W,\]
see \eqref{imagepsi}. Since $W$ is strongly stable, then for all $i\in\{0,\ldots , e-1\}$:
\[w=y_{a_0+1}\cdots y_{a_0+\ldots +(a_i-1)+1}\cdot y_{a_0+\ldots +(a_i-1)+(a_{i+1}+1)+1}\cdots y_{a_0+\ldots +a_{e-1}+1}\in W.\]
Therefore, if $a_i>0$, we get $v=x_0^{a_0}\cdots x_i^{a_i-1}\cdot x_{i+1}^{a_{i+1}+1}\cdots x_e^{a_e}=\phi(w)$, so $v\in \phi(W)$.

(ii) $\implies$ (i). Let $w=y_1^{b_1}y_2^{b_2}\cdots y_d^{b_d}\in W$. Then, using \eqref{imagephi},
\[\phi(w)=x_{b_1}x_{b_1+b_{2}}\cdots x_{b_1+\ldots +b_d}\in \phi(W).\]
By contradiction there exist $p$ and $q$ in $\{1,\ldots ,d\}$ such that $b_p>0$, $q<p$ and
\[\frac{w}{y_p}\cdot y_q = y_1^{b_1}\cdots y_q^{b_q+1}\cdots y_p^{b_p-1}\cdots y_d^{b_d}\notin W.\]
Of course we can suppose that $q=p-1$, so we get a contradiction, because the assumptions yield:
\[\phi\left(\frac{w}{y_p}\cdot y_{p-1}\right)=x_{b_1}\cdots x_{b_1+\ldots +(b_{p-1}+1)}x_{b_1+\ldots +(b_{p-1}+1)+(b_{p}-1)}\cdots x_{b_1+\ldots +b_d}\in \phi(W).\]
\end{proof}

Thanks to Lemma \ref{sstableinM}, therefore, it will be clear what we mean for a monomial space of $\S_d$ being strongly stable.

\begin{prop}\label{sstableprop}
Let $V\subset S_d$ be a monomial space. The following are equivalent:
\begin{itemize}
\item[{\em (i)}] $V^c$ is a strongly stable monomial subspace of $\S_d$;
\item[{\em (ii)}] $V$ is a strongly stable monomial subspace of $S_d$.
\end{itemize}
\end{prop}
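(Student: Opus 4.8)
The plan is to prove the equivalence by translating both conditions into statements about monomials of $\S_d$ and comparing them through the combinatorial description of the $*$-operation, rather than passing through the polynomial ring $K[y_1,\ldots,y_d]$ directly. Recall that $V$ is strongly stable in $S_d$ means: for $u=x_0^{a_0}\cdots x_e^{a_e}\in V$ with $a_e>0$, replacing any occurrence of $x_j$ by $x_i$ with $i<j$ keeps the monomial in $V$. On the other side, by Lemma \ref{sstableinM}, $V^c$ being strongly stable \emph{as a subspace of $\S_d$} means: whenever $x_0^{a_0}\cdots x_e^{a_e}\in V^c$ with $a_e>0$, then $x_0^{a_0}\cdots x_i^{a_i-1}x_{i+1}^{a_{i+1}+1}\cdots x_e^{a_e}\in V^c$ for every $i$ with $a_i>0$. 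So the whole statement reduces to showing that these two move-systems are, in a precise sense, dual to each other via complementation.

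First I would rewrite both moves in a uniform ``single-step'' language. For the $S_d$-strong-stability it suffices (as usual for strongly stable spaces) to check only the elementary moves that send one variable $x_j$ to $x_{j-1}$. The point is that the elementary $\S_d$-move described in Lemma \ref{sstableinM}(ii) --- decreasing the exponent of $x_i$ by one and increasing that of $x_{i+1}$ by one --- is exactly the \emph{reverse} of an elementary $S_d$-move that decreases $x_{i+1}$ to $x_i$. Concretely, if $w=x_0^{a_0}\cdots x_i^{a_i-1}x_{i+1}^{a_{i+1}+1}\cdots x_e^{a_e}$ is obtained from $u=x_0^{a_0}\cdots x_e^{a_e}$ by the $\S_d$-move at position $i$, then $u$ is obtained from $w$ by the $S_d$-move that replaces one copy of $x_{i+1}$ by $x_i$. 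This observation is the heart of the matter and reduces the proposition to a contrapositive bookkeeping statement.

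The second step is to run the contrapositive in both directions using Lemma \ref{sstableinM}. For (ii) $\Rightarrow$ (i): suppose $V$ is strongly stable in $S_d$; take $w\in V^c$ and an $\S_d$-move producing $u$ from $w$. By the reversal observation, $u$ is obtained from $w$ by an $S_d$-move \emph{of the opposite direction}, i.e. $w$ is obtained from $u$ by a strongly stable $S_d$-move. Hence if $u\in V$, strong stability of $V$ would force $w\in V$, contradicting $w\in V^c$; therefore $u\in V^c$, so $V^c$ satisfies condition (ii) of Lemma \ref{sstableinM} and is strongly stable in $\S_d$. For (i) $\Rightarrow$ (ii): suppose $V^c$ is strongly stable in $\S_d$; take $u\in V$ and a strongly stable $S_d$-move producing some $v$; I would show $v\in V$ by assuming $v\in V^c$ and applying the reverse $\S_d$-move (which, by the same reversal, recovers $u$), forcing $u\in V^c$ via strong stability of $V^c$, a contradiction.

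The main obstacle I anticipate is purely notational: one must be careful that the elementary $S_d$-move $x_{i+1}\mapsto x_i$ and the elementary $\S_d$-move of Lemma \ref{sstableinM}(ii) correspond under the constraint $a_i>0$ (for the $S_d$-direction, one needs $a_{i+1}>0$ to have a copy of $x_{i+1}$ to move). Matching these nonvanishing conditions correctly, and checking that it suffices to treat only adjacent moves $j\mapsto j-1$ (so that general strong-stability follows by composing adjacent moves), is where the care is needed. Once the exponent bookkeeping is set up so that each elementary $\S_d$-move is literally the inverse of an elementary $S_d$-move, both implications follow immediately from the contrapositive, and no genuinely difficult step remains.
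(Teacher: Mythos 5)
Your strategy is in fact the same as the paper's: both directions are run contrapositively through Lemma \ref{sstableinM}, exploiting the fact that the elementary move of Lemma \ref{sstableinM}(ii) reverses an adjacent strongly stable move in $S_d$, and your (ii) $\Rightarrow$ (i) is essentially the paper's argument (except that you should also invoke Lemma \ref{ideals} there, see below). But your (i) $\Rightarrow$ (ii) has a genuine gap, and it is not the merely notational one you flagged: your reversal claim is true in one direction only. Every elementary $\S_d$-move is the reverse of an adjacent $S_d$-move, but \emph{not} every adjacent $S_d$-move is the reverse of an $\S_d$-move. Indeed, an $\S_d$-move never changes $m(u)$: under $\psi$ it is the exchange $y_p\mapsto y_{p-1}$ inside $K[y_1,\ldots ,y_d]$, which preserves the $y$-degree, and by \eqref{imagepsi} the $y$-degree of $\psi(u)$ equals $m(u)$. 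By contrast, the adjacent $S_d$-move at $j=e=m(u)$ in the case $a_e=1$, which replaces the unique occurrence of $x_e$ by $x_{e-1}$, produces $w=x_0^{a_0}\cdots x_{e-1}^{a_{e-1}+1}$ with $m(w)=e-1<m(u)$. Its reverse is not an exchange but a multiplication: $u=w*(x_0^{d-1}x_1)$, equivalently $\psi(u)=\psi(w)\cdot y_d$. So in this boundary case, assuming $w\in V^c$, the strong stability of $V^c$ \emph{as a monomial subspace} gives you nothing, and the contradiction you announce does not materialize.

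This cannot be fixed by bookkeeping alone: with the pure exchange reading of (i), the implication (i) $\Rightarrow$ (ii) is simply false. Take $d=1$ and $V=\langle x_1\rangle\subset S_1$: here $\psi(x_e)=y_1^e$, the exchange condition of Lemma \ref{sstableinM}(ii) is vacuous in a single $y$-variable, so $V^c$ satisfies it trivially, while $V$ is not strongly stable in $S_1$ (it misses $x_0$). What saves the proposition --- and what the paper's proof uses, reading (i) as ``$V^c$ is a strongly stable monomial \emph{ideal} of $\S_d$'' --- is precisely the ideal property: in the case $j=e$, $a_e=1$ one concludes $u=w*(x_0^{d-1}x_1)\in V^c$ because $V^c$ is closed under the $*$-multiplication, not because of an exchange. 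To repair your proof, interpret (i) as including that $V^c$ is an ideal of $\S_d$ (in the direction (ii) $\Rightarrow$ (i) this is supplied by Lemma \ref{ideals}, since strongly stable implies block stable), keep your exchange argument for the moves with $j<e$ or $a_e\geq 2$, and handle the $m$-dropping move by multiplication by $x_0^{d-1}x_1$. With that one extra case your argument coincides with the paper's.
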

\begin{proof}
First we prove (i) $\implies$ (ii). Pick $u=x_0^{a_0}\cdots x_e^{a_e}\in V$. By contradiction, assume that there exists $i\in \{1,\ldots ,e\}$ such that $w=x_0^{a_0}\cdots x_{i-1}^{a_{i-1}+1}x_i^{a_i-1}\cdots x_e^{a_e}\notin V$. So $w\in V^c$, and since $V^c$ is a strongly stable monomial ideal of $\S_d$, by Lemma \ref{sstableinM} we get $u\in V^c$, which is a contradiction.

\vskip 1mm

(ii) $\implies$ (i). By Lemma \ref{ideals} we have that $V^c$ is an ideal of $\S_d$. Consider $u=x_0^{a_0}\cdots x_e^{a_e}\in V^c$ with $a_e>0$ and $i\in \{0,\ldots ,e-1\}$. If $w=x_0^{a_0}\cdots x_i^{a_i-1}\cdot x_{i+1}^{a_{i+1}+1}\cdots x_e^{a_e}$ were not in $V^c$, then $u$ would be in $V$ because $V$ is a strongly stable monomial space. Thus $V^c$ has to be strongly stable once again using Lemma \ref{sstableinM}.
\end{proof}

\begin{thm}\label{characterization}
Let $(w_i)_{i\in \NN}$ be a sequence of natural numbers. Then the following are equivalent:
\begin{itemize}
\item[{\em (i)}] There exists a strongly stable monomial space $V\subset S_d$ such that $w_i(V)=w_i$ \ for any $i\in \NN$.
\item[{\em (ii)}] There exists a block stable monomial space $V\subset S_d$ such that $w_i(V)=w_i$ \ for any $i\in \NN$.
\item[{\em (iii)}] $(w_i)_{i\in \NN}$ is an {\it O}-sequence such that $w_1\leq d$.
\end{itemize}
\end{thm}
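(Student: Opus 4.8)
The plan is to prove the equivalence of the three conditions by exhibiting a cycle of implications, using the results already established about $\S_d$ as the engine. The implication (i) $\implies$ (ii) is immediate from Remark \ref{stablenotdouble}, since every strongly stable monomial space is in particular block stable, and the invariants $w_i$ depend only on the monomials present, so taking the same $V$ works. The implication (ii) $\implies$ (iii) is exactly the content of Corollary \ref{bound}: if $V$ is block stable then by Lemma \ref{ideals} the complement $V^c$ is an ideal of $\S_d$, whence $\S_d/V^c$ is a standard graded $K$-algebra by Proposition \ref{structure}, and its Hilbert function is precisely $(w_i(V))_{i\in\NN}=(w_i)_{i\in\NN}$; Macaulay's theorem then forces this to be an $O$-sequence with $w_1\le d$.

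The remaining and only substantive implication is (iii) $\implies$ (i): given an $O$-sequence $(w_i)$ with $w_1\le d$, I must produce a \emph{strongly stable} monomial space $V\subset S_d$ realizing it. The idea is to transport Macaulay's existence statement through the isomorphism $\S_d\cong K[y_1,\ldots,y_d]$. First I would invoke part (iii) of Macaulay's theorem (with $n=d$): the lexsegment ideal $L\subset K[y_1,\ldots,y_d]$ whose degree-$i$ component consists of the largest $\binom{d+i-1}{i}-w_i$ monomials is a genuine ideal, and by construction $\HF_{K[y_1,\ldots,y_d]/L}(i)=w_i$. A lexsegment monomial space is strongly stable, so $L$ is a strongly stable ideal of $K[y_1,\ldots,y_d]$. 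Now set $V^c=\phi(L)\subset\S_d$, the image of $L$ under the graded isomorphism $\phi$ of \eqref{identification}, and let $V=(V^c)^c$ be the complementary monomial space in $S_d$.

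It remains to check that this $V$ has the required properties. Since $\phi$ is a graded $K$-algebra isomorphism, $\phi(L)$ is an ideal of $\S_d$, so by Lemma \ref{ideals} its complement $V$ is a block stable monomial space; moreover $L$ strongly stable means, via Lemma \ref{sstableinM}, that $V^c=\phi(L)$ is a strongly stable monomial subspace of $\S_d$, and then Proposition \ref{sstableprop} upgrades this to the assertion that $V$ itself is strongly stable in $S_d$. Finally the count is preserved: $w_i(V)=\dim_K(\S_d/V^c)_i=\dim_K(K[y_1,\ldots,y_d]/L)_i=w_i$ because $\phi$ respects the grading and hence the Hilbert functions of $\S_d/V^c$ and $K[y_1,\ldots,y_d]/L$ coincide. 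This closes the cycle.

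The main obstacle is conceptual rather than computational: one must correctly dualize between ``monomials present in $V$'' and ``monomials present in the ideal $V^c$ of $\S_d$,'' keeping straight that the $w_i(V)$ count the generators landing in the \emph{quotient} $\S_d/V^c$ rather than in the ideal, and that strong stability passes between $V$ and $V^c$ only through the interplay of Lemma \ref{sstableinM} and Proposition \ref{sstableprop}. Once the bookkeeping of complements is set up so that Proposition \ref{sstableprop} and Lemma \ref{ideals} can be applied verbatim, every step reduces to a direct citation of the machinery built in this section together with Macaulay's theorem.
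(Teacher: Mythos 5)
Your proof is correct and follows essentially the same route as the paper: the easy implications via block stability and Corollary \ref{bound}, and for (iii) $\implies$ (i) invoking Macaulay's theorem to produce a lexsegment (hence strongly stable) ideal $L\subset K[y_1,\ldots,y_d]$, transporting it through $\phi$, and applying Proposition \ref{sstableprop} to conclude that the complement $V=\phi(L)^c$ is strongly stable with $w_i(V)=\HF_{K[y_1,\ldots,y_d]/L}(i)=w_i$. Your additional bookkeeping (the explicit identification $w_i(V)=\dim_K(\S_d/V^c)_i$ and the remark that $\phi(L)$ is an ideal) is a harmless elaboration of steps the paper leaves implicit.
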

\begin{proof}
(i) $\implies$ (ii) is obvious and (ii) $\implies$ (iii) is Corollary \ref{bound}. So (iii) $\implies$ (i) is the only thing we still have to prove. If the sequence $(w_i)_{i\in \NN}$ satisfies the conditions of (iii), then the theorem of Macaulay guarantees that there exists a lexsegment ideal $J\subset K[y_1,\ldots ,y_d]$ such that
\begin{equation*}
\HF_{K[y_1,\ldots ,y_d]/J}(i)= w_i \ \ \ \forall \ i\in \NN
\end{equation*}
Being a lexsegment ideal, $J$ is strongly stable. So $\phi(J)^c$ is a strongly stable monomial subspace of $S_d$ by Proposition \ref{sstableprop}. Clearly we have:
\[m_i(\phi(J)^c)=\HF_{K[y_1,\ldots ,y_d]/J}(i)=w_i \ \ \ \forall \ i\in \NN,\]
thus we conclude.
\end{proof}

\bigskip

\bigskip

\begin{discussion}

Theorem~\ref{mainlinear} implies \cite[Proposition 3.8]{Mu}. Let us briefly discuss the proof of Murai, comparing it with ours.

Let $u=x_{i_1}x_{i_2}\cdots x_{i_d}$ be a monomial with $i_1\leq i_2\leq \ldots\leq i_d$. Following Kalai, the {\em stretched} monomial  arising from $u$ is
\[
u^{\sigma}=x_{i_1}x_{i_2+1}\cdots x_{i_d+(d-1)}.
\]
Notice that $u^\sigma$ is a squarefree monomial.
The {\it compress operator} $\tau$ is the inverse to $\sigma$. If $v=x_{j_1}x_{j_2}\cdots x_{j_d}$ is a squarefree monomial, we define the {\em compressed} monomial arising from $v$ to be
\[
v^{\tau}=x_{j_1}x_{j_2-1}\cdots x_{j_d-(d-1)}.
\]
Let $I\subset K[x_1,\ldots ,x_n]$  be a strongly stable ideal generated in degree $d$ with $G(I)=\{u_1,\ldots,u_r\}$. We set
\[
I^\sigma=(u_1^\sigma, u_2^\sigma,\ldots,u_r^\sigma)\subset K[x_1,\ldots,x_{n+m-1}].
\]
As shown in \cite[Lemma 11.2.5]{HH2}, one has  that $I^\sigma$ is a squarefree strongly stable ideal.  Recall that a squarefree monomial ideal $J$ is called {\em squarefree strongly stable}, if for all squarefree generators  $u$ of $I$ and all  $i<j$ for which  $x_j$ divides $u$ and $x_i$ does not divides $u$, one has that $(u/x_j)\cdot x_i\in J$.
Denoting by $^\vee$ the {\it Alexander dual} of a squarefree monomial ideal, given a strongly stable ideal $I$ we set
\[
I^{\dual}=((I^\sigma)^\vee)^\tau,
\]
where for a squarefree monomial ideal $J$ with $G(J)=\{u_1,\ldots,u_m\}$ we set $J^\tau=(u_1^\tau,\ldots,u_m^\tau)$. Murai showed his result using a formula relating the Betti numbers of a squarefree monomial ideal with linear resolution and the $h$-vector of the quotient by its Alexander dual, that is Cohen-Macaulay by the Eagon-Reiner theorem. 

Starting with a strongly stable monomial ideal is necessary, otherwise the stretching operator changes the Betti numbers. However, one can show that on strongly stable ideals this duality actually coincides with the one discussed in this note: If $J'\subset K[x_1,\cdots ,x_{n}]$ is a strongly stable ideal  generated in degree $d$ and $J\subset S$ is the ideal $J'S$ under the transformation $x_i\mapsto x_{i+1}$, then
\[\psi(\langle G(J)^c\rangle)=J'^{\dual}\]
up to degree $n$ ($J'^{dual}$ has not minimal generators of degree bigger than $n$).
To show this, it is enough to notice that $J'^{\dual}\subset \psi(\langle G(J)^c\rangle)$ because the graded rings $K[x_1,\cdots ,x_d]/ J'^{\dual}$ and
$K[x_1,\cdots ,x_d]/\langle G(J)^c\rangle$ share the same Hilbert function up to $n$. 

\end{discussion}

\bigskip 

\bigskip

Actually, a careful reading of the proof of Theorem \ref{characterization} shows that, given a $O$-sequence, we can give explicitly a strongly stable monomial subspace $V\subset S_d$ such that $w_i(V)=w_i$ for any $i\in\NN$. The reason is that to any Hilbert function is associated a unique lexsegment ideal: Let $(w_i)_{i\in \NN}$ be a sequence of natural numbers. For any $i\in \NN$, set
\[ V_i = \{\mbox{biggest $w_i$ monomials $u\in S_d$ such that $m(u)=i$}\}.\]
Then we call $V=\langle \cup_{i\in \NN} V_i\rangle \subset S_d$ the {\it piecewise lexsegment} monomial space (of type $(d,(w_i)_{\NN})$). The proof of Theorem \ref{characterization} yields:
\begin{corollary}
The piecewise lexsegment of type $(d,(w_i)_{\NN})$ is strongly stable if and only if $(w_i)_{\NN}$ is an $O$-sequence such that $w_1\leq d$.
\end{corollary}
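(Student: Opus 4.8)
The plan is to derive both implications from Theorem~\ref{characterization} together with an explicit identification of the strongly stable space produced in its proof. Write $V=\langle\cup_{i\in\NN}V_i\rangle$ for the piecewise lexsegment of type $(d,(w_i)_{\NN})$; by construction $w_i(V)=w_i$ for every $i$. With this in hand the implication ``$V$ strongly stable $\Rightarrow (w_i)_\NN$ is an $O$-sequence with $w_1\leq d$'' is immediate: $V$ is then a strongly stable monomial space of $S_d$ realizing the $w_i$, so (i)$\Rightarrow$(iii) of Theorem~\ref{characterization} applies directly. All the substance lies in the converse.

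For the converse, assume $(w_i)_\NN$ is an $O$-sequence with $w_1\leq d$. Following the proof of (iii)$\Rightarrow$(i) of Theorem~\ref{characterization}, Macaulay's theorem yields a lexsegment ideal $J\subset K[y_1,\ldots,y_d]$ with $\HF_{K[y_1,\ldots,y_d]/J}(i)=w_i$ for all $i$, and Proposition~\ref{sstableprop} then guarantees that $\phi(J)^c\subset S_d$ is strongly stable. Hence it suffices to prove the equality
\[
\phi(J)^c = V,
\]
for then $V$ inherits strong stability. Since $\phi$ is a graded isomorphism carrying $K[y_1,\ldots,y_d]_e$ onto $(\S_d)_e=\langle u : m(u)=e\rangle$, it is enough to check this in each graded component $e$. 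There $\phi(J)^c\cap(\S_d)_e$ is the $\phi$-image of the $w_e$ monomials of $K[y_1,\ldots,y_d]_e$ that avoid $J$, i.e. the $\phi$-image of the \emph{smallest} $w_e$ monomials in the lexicographic order (because $J$ is a lexsegment), whereas $V_e$ consists of the \emph{biggest} $w_e$ monomials $u$ with $m(u)=e$ in the degree lexicographic order of $S_d$.

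The key step, and the main obstacle, is therefore to show that $\phi$ reverses these two orders: for monomials $w,w'$ of the same degree $e$ in $K[y_1,\ldots,y_d]$ one has $w>_{\mathrm{lex}}w'$ if and only if $\phi(w)<\phi(w')$ in the degree lexicographic order of $S_d$. Granting this, an initial (smallest) segment of the lexicographic order maps onto a final (biggest) segment of the degree lexicographic order, so the standard monomials of $K[y_1,\ldots,y_d]/J$ in degree $e$ map exactly onto the $w_e$ biggest monomials with $m(u)=e$, which is precisely $V_e$; summing over $e$ gives $\phi(J)^c=V$.

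To establish the order-reversal I would pass to partial sums. Writing $w=y_1^{b_1}\cdots y_d^{b_d}$ and setting $s_k=b_1+\cdots+b_k$, formula \eqref{imagephi} gives $\phi(w)=x_{s_1}\cdots x_{s_d}$ with $0\leq s_1\leq\cdots\leq s_d=e$, and $w>_{\mathrm{lex}}w'$ translates into $s_{k_0}>s'_{k_0}$ at the first index $k_0$ where the partial-sum sequences differ. A short count of how often each value occurs among the $s_k$ then shows that, with $v=s'_{k_0}$, the exponents of $x_0,\ldots,x_{v-1}$ coincide in $\phi(w)$ and $\phi(w')$ while the exponent of $x_v$ is strictly smaller in $\phi(w)$ (the value $v$ occurs in $\{s'_k\}$ at least at position $k_0$ but not at all among $\{s_k\}_{k\geq k_0}$, since $s_{k_0}>v$). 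This is exactly $\phi(w)<\phi(w')$ in the degree lexicographic order. I expect this partial-sum comparison to be the only genuinely computational point; everything else reduces to the already established Theorem~\ref{characterization} and Proposition~\ref{sstableprop}.
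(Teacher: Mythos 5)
Your proposal is correct and takes essentially the same route as the paper: the paper derives the corollary directly from the proof of Theorem~\ref{characterization} (the lexsegment ideal $J$ with the prescribed Hilbert function, strong stability of $\phi(J)^c$ via Proposition~\ref{sstableprop}), leaving the identification of $\phi(J)^c$ with the piecewise lexsegment to ``a careful reading'' and the uniqueness of lexsegment ideals. Your partial-sum argument showing that $\phi$ reverses the lexicographic orders is precisely the implicit step spelled out, and it checks out (the exponents of $x_0,\ldots,x_{v-1}$ agree while the exponent of $x_v$ strictly increases, as you claim).
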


\begin{remark}
The notion of piecewise lexsegment was successfully used in \cite{HSV} to characterize the possible extremal Betti numbers of a homogeneous ideal. We wish to point out that, even if \cite[Theorem 3.7]{HSV} is stated in characteristic $0$, actually the same conclusion holds true in any characteristic, by exploiting a construction given by Caviglia and Sbarra in \cite{CS} (see Proposition 2.2(vi) of that paper). 
\end{remark}

Notice that the established interaction between $S_d$ and $K[y_1,\ldots ,y_d]$ can be also formulated between
\[K[x_0,\ldots ,x_m] \ \mbox{ \ and \ } \ K[y_1,\ldots ,y_d]/(y_1,\ldots ,y_d)^{m+1} \ \ \ \forall \ m\geq 1.\]
Therefore, an interesting corollary of Proposition \ref{sstableprop} is the following.

\begin{corollary}\label{Matteo correspondence}
Let us define the sets
\[A=\{\mbox{strongly stable monomial ideals of $K[x_0,\ldots ,x_m]$ generated in degree $d$}\}\]
and
\[B=\{ \mbox{strongly stable monomial ideals of  $K[y_1,\ldots ,y_d]$ with height $d$ and generated in  degrees $\leq m+1$}\}.\]
Then the assignation $V\mapsto \psi(V^c)$ establishes a 1-1 correspondence between $A$ and $B$.
\end{corollary}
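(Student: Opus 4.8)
The plan is to strip the defining conditions of $A$ and $B$ down to a single ideal-theoretic equivalence, after which the bijection falls out of Proposition~\ref{sstableprop}, Lemma~\ref{ideals} and the isomorphism $\psi$ of Proposition~\ref{structure}. Throughout I would identify a strongly stable ideal $I\subseteq K[x_0,\ldots,x_m]$ generated in degree $d$ with its degree-$d$ component, viewed as a strongly stable monomial space $V\subseteq S_d$ every monomial $u$ of which satisfies $m(u)\le m$; this is the object to which the statement applies the operation $V\mapsto\psi(V^c)$, and conversely any such $V$ generates such an ideal.

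First I would build the dictionary. Since $V$ is strongly stable it is block stable (Remark~\ref{stablenotdouble}), so by Lemma~\ref{ideals} and Proposition~\ref{sstableprop} the space $V^c$ is a strongly stable ideal of $\S_d$, and hence $J:=\psi(V^c)$ is a strongly stable ideal of $K[y_1,\ldots,y_d]$ because $\psi$ is an isomorphism of graded $K$-algebras (Proposition~\ref{structure}). The grading in Proposition~\ref{structure} records exactly that $\deg_y\psi(u)=m(u)$ for every monomial $u\in S_d$; consequently the condition that $m(u)\le m$ for all $u\in V$ is equivalent to the condition that $V^c$ contains every monomial $u$ with $m(u)\ge m+1$, i.e. to $(y_1,\ldots,y_d)^{m+1}\subseteq J$.

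The technical core is then the equivalence, for a strongly stable ideal $J\subseteq K[y_1,\ldots,y_d]$, between $(y_1,\ldots,y_d)^{m+1}\subseteq J$ and the two conditions defining $B$, namely $\height J=d$ together with generation in degrees $\le m+1$. One direction is easy: if $(y_1,\ldots,y_d)^{m+1}\subseteq J$ then $\sqrt J=(y_1,\ldots,y_d)$, so $\height J=d$ (here $J$ is proper, since the nonzero strongly stable space $V$ contains $x_0^d$, whence $1=\psi(x_0^d)\notin J$), and every monomial of degree $\ge m+2$ is divisible by one of degree $m+1$ already in $J$, hence is not a minimal generator. The reverse direction is the crux: assuming $\height J=d$, I would take the least $a$ with $y_d^a\in J$; as $y_d$ is the only variable dividing $y_d^a$ and $y_d^{a-1}\notin J$, the monomial $y_d^a$ is a minimal generator of $J$, so $a\le m+1$ and thus $y_d^{m+1}\in J$; finally, applying strong stability repeatedly to lower the indices of $y_d^{m+1}$ produces every monomial of degree $m+1$, giving $(y_1,\ldots,y_d)^{m+1}\subseteq J$.

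With the dictionary and this equivalence in hand, the corollary becomes formal. The assignment $V\mapsto\psi(V^c)$ is the composite of complementation on monomial spaces of $S_d$ with the bijection $\psi$, hence injective, and by the two previous steps it carries $A$ into $B$. For surjectivity, given $J\in B$ I would set $W=\phi(J)$ and $V=W^c$; Proposition~\ref{sstableprop} together with Lemma~\ref{ideals} shows that $V$ is a strongly stable monomial space, the key equivalence gives $m(u)\le m$ for all $u\in V$ so that $V\in A$, and $\psi(V^c)=\psi(W)=J$ by construction. I expect essentially all the difficulty to be concentrated in the reverse direction of the key equivalence: extracting $y_d^{m+1}\in J$ from the height and degree hypotheses, and then sweeping out the whole power $(y_1,\ldots,y_d)^{m+1}$ by strong stability. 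Everything else is transport of structure through $\psi$ and bookkeeping with the already-established Proposition~\ref{sstableprop} and Lemma~\ref{ideals}.
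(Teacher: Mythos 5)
Your argument is correct, and while its skeleton (transporting strong stability through $\psi$ via Proposition~\ref{sstableprop} and Lemma~\ref{ideals}, and reducing both defining conditions of $B$ to the single inclusion $(y_1,\ldots,y_d)^{m+1}\subseteq J$) matches what the paper does implicitly, you handle the crucial step by genuinely different means. The paper disposes of it homologically in two lines: a monomial ideal $J\subseteq K[y_1,\ldots,y_d]$ of height $d$ is $(y_1,\ldots,y_d)$-primary and hence contains $(y_1,\ldots,y_d)^k$ for all $k\geq\reg(J)$, and since strongly stable ideals are componentwise linear, generation in degrees $\leq m+1$ forces $\reg(J)\leq m+1$. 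You instead argue combinatorially: height $d$ yields a pure power $y_d^a\in J$, minimality of $a$ makes $y_d^a$ a minimal generator so that $a\leq m+1$, and a strong-stability sweep applied to $y_d^{m+1}$ produces every monomial of degree $m+1$. Your route is more elementary, staying entirely within the paper's combinatorial toolkit and needing neither Castelnuovo--Mumford regularity nor the componentwise linearity of strongly stable ideals, at the cost of being longer; the paper's route is essentially a one-liner once those standard facts are granted. You are also more complete than the printed proof: the paper only verifies that members of $B$ contain $(y_1,\ldots,y_d)^{m+1}$, delegating the converse implication and the bijectivity bookkeeping to the remark about $K[y_1,\ldots,y_d]/(y_1,\ldots,y_d)^{m+1}$ preceding the corollary, whereas you spell out the forward direction (the computation of $\sqrt{J}$, including properness of $J$ via $x_0^d\in V$ and $\psi(x_0^d)=1$, and the non-minimality of would-be generators of degree $\geq m+2$) as well as injectivity and surjectivity explicitly.
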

\begin{proof}
Notice that if $I\subset K[y_1,\ldots ,y_d]$ is of height $d$, then $(y_1,\ldots ,y_d)^k\subset I$ for all $k\geq \reg(I)$. Since $I$ is generated in degrees $\leq m+1$ and componentwise linear, we have $\reg(I)\leq m+1$, so we are done by what said before the corollary.
\end{proof}
%

It is worth to rest a bit on the properties of block stable ideals, since by Lemma \ref{ideals} they seem to arise naturally by studying strongly stable ideals. Let us consider the Borel subgroup of $\GL_{\infty}(K)$ consisting of $\infty \times \infty$ upper diagonal matrices with entries in $K$ and 1's on the diagonal. In characteristic $0$ Borel fixed (w.r.t. the obvious action) monomial spaces are strongly stable, so in particular block stable. However in positive characteristic the situation is quite different, for example the space $\langle x_0^2,x_1^2\rangle$ is Borel fixed in characteristic $2$ but not strongly stable. 

\begin{prop}
Regardless to $\chara(K)$, a Borel fixed monomial space is block stable.
\end{prop}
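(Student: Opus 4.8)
The plan is to reduce block stability to iterating a single elementary operation on monomials --- moving an \emph{entire} power of one variable down to a variable of smaller index --- and to observe that this particular operation is available for any Borel fixed monomial space, independently of $\chara(K)$.

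First I would record the exact consequence of Borel fixedness that I need. Fix indices $i'<j$ and let $g_c$ be the element of the Borel subgroup that fixes every variable except $x_j$, which it sends to $x_j+cx_{i'}$ (this matrix is upper diagonal with $1$'s on the diagonal precisely because $i'<j$). If $u=x_j^{a}w\in V$, where $w=u/x_j^{a}$ is not divisible by $x_j$, then
\[g_c\cdot u=(x_j+cx_{i'})^{a}\,w=\sum_{k=0}^{a}\binom{a}{k}c^{k}\,x_{i'}^{k}x_j^{a-k}\,w\in g_cV=V.\]
Since $V$ is a monomial space, every monomial occurring with nonzero coefficient on the left lies in $V$. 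Among the summands, $x_{i'}^{a}w$ is the only one of $x_j$-degree $0$, so its coefficient equals $\binom{a}{a}c^{a}=c^{a}$; taking $c=1$ this is nonzero, whence
\[x_{i'}^{a}\bigl(u/x_j^{a}\bigr)\in V.\]
I will call this \emph{moving the whole $x_j$-block down to $x_{i'}$}. The crucial point, and the reason the argument is characteristic-free, is that the binomial coefficient in play is $\binom{a}{a}=1$, which never vanishes; strong stability would instead require the move of a single copy of $x_j$, governed by $\binom{a}{1}=a$, which does vanish when $\chara(K)\mid a$ (exactly the failure exhibited by $\langle x_0^{2},x_1^{2}\rangle$ in characteristic $2$).

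Next I would express the block-shift of Definition~\ref{def:sstable} as a composition of such moves. Fix $u=x_0^{a_0}\cdots x_e^{a_e}\in V$ and $i\in\{1,\ldots,e\}$, whose block-shift is
\[v=\frac{u}{x_i^{a_i}\cdots x_e^{a_e}}\cdot x_{i-1}^{a_i}\cdots x_{e-1}^{a_e}=x_0^{a_0}\cdots x_{i-2}^{a_{i-2}}\,x_{i-1}^{a_{i-1}+a_i}\,x_i^{a_{i+1}}\cdots x_{e-1}^{a_e},\]
i.e. the monomial obtained from $u$ by decreasing every index $\geq i$ by one. I would produce $v$ from $u$ in $e-i+1$ steps: first move the whole $x_i$-block down to $x_{i-1}$, then the whole $x_{i+1}$-block down to $x_i$, and so on, finishing with the whole $x_e$-block moved down to $x_{e-1}$.

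It remains to check the bookkeeping. At the $\ell$-th step one moves $x_{i+\ell-1}$ to $x_{i+\ell-2}$; the sources of the earlier steps are $x_i,\ldots,x_{i+\ell-2}$ and their targets $x_{i-1},\ldots,x_{i+\ell-3}$, so $x_{i+\ell-1}$ is still untouched and carries its original exponent $a_{i+\ell-1}$, while the target $x_{i+\ell-2}$ was vacated at the previous step and therefore acquires exactly $a_{i+\ell-1}$ (for $\ell=1$ the target $x_{i-1}$ simply accumulates $a_{i-1}+a_i$). By the displayed consequence of Borel fixedness each intermediate monomial again lies in $V$, and the final one is precisely $v$; steps where a middle exponent $a_m$ is zero are vacuous and harmless. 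Thus $v\in V$, so $V$ is block stable. The only genuinely delicate step is the first paragraph --- isolating the ``$\binom{a}{a}=1$'' move --- and everything after it is elementary.
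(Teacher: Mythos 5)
Your proof is correct and follows essentially the same route as the paper: both reduce the block-shift of Definition~\ref{def:sstable} to iterated moves of an entire $x_j$-block down one index, each legitimate in any characteristic because the relevant binomial coefficient is $\binom{a}{a}=1$. The only difference is that you derive this elementary move directly from the action of the matrix $x_j\mapsto x_j+cx_{i'}$, whereas the paper simply cites \cite[Theorem 15.23]{Ei}; your version is self-contained (and applies verbatim to monomial spaces, not just ideals), but the mathematical content is identical.
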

\begin{proof}
Let $V\subset S$ be a Borel fixed vector space. If $u=x_0^{a_0}\cdots x_e^{a_e}\in V$, then for any $i=1,\ldots ,e$ we have that $(u/x_i^{a_i})\cdot x_{i-1}^{a_{i}}\in V$ since $\binom{a_i}{a_i}=1$ is different from $0$ modulo $\chara(K)$, whatever the latter is (see \cite[Theorem 15.23]{Ei}). Recursively one gets
\[\frac{u}{x_i^{a_i}\cdots x_e^{a_e}}\cdot x_{i-1}^{a_i}\cdots x_{e-1}^{a_e}\in V.\]
\end{proof}

One might be induced to look for an analog of the Eliahou-Kervaire formula for block stable ideals. Such a formula, however, would be not purely combinatorial, in the sense that the graded Betti numbers of block stable ideals depend on the characteristic of the field $K$: In fact even the Betti numbers of a Borel fixed ideal depend on the characteristic, as recently shown (indeed while they were at MSRI for the 2012 ``Commutative Algebra" program) by Caviglia and Kummini in \cite[Theorem 3.2]{CM}, solving negatively a conjecture of Pardue. Their method gives rise to a Borel fixed ideal generated in many degrees. However Caviglia pointed out to us that we can even get a Borel fixed ideal generated in a single degree as follows:

\begin{example}
There is an ideal $I\subset R=\ZZ[x_1,\ldots ,x_6]$ generated in a single degree 2726 such that it is Borel fixed in characteristic 2 but its Betti numbers depend on the characteristic.

{\it Proof}: Let $J\subset R$ the Borel fixed ideal (in characteristic 2) of \cite[Example 3.7]{CM}. If $d=2729$, we have that $\beta_{2,d}(J(R\otimes_{\ZZ}K))$ is $0$ or not according to $\chara(K)$ being different or equal to $2$. By computing the Betti numbers in terms of Koszul homology w.r.t. $(x_1,\ldots ,x_6)$, it is clear that
\[\beta_{2,d}(J(R\otimes_{\ZZ}K))=\beta_{2,d}((J_{d-2}+J_{d-3})(R\otimes_{\ZZ}K)).\]
However the minimal generator of maximal degree of $J$ has degree 2568, that is less than $d-3$. So $(J_{d-3})=(J_{d-2}+J_{d-3})$. In particular $I=(J_{d-3})$ is a Borel fixed ideal (in characteristic 2) generated in degree 2726 whose Betti numbers are sensible to the characteristic.

\end{example}

\subsection{The possible Betti numbers of an ideal with linear resolution}

In this subsection we will see how Theorem \ref{characterization} yields a characterization of the Betti tables with just one row. Such an issue, in fact, is equivalent to characterize the possible graded Betti numbers of a strongly stable monomial ideal of $P$ generated in one degree. Actually, more generally, to characterize the possible Betti tables of a componentwise linear ideal of $P$ is equivalent to characterize the possible Betti tables of a strongly stable monomial ideal of $P$. In fact, in characteristic $0$ this is true because the generic initial ideal of any ideal $I$ is strongly stable \cite[Theorem 15.23]{Ei}. Moreover, if $I$ is componentwise linear and the term order is degree reverse lexicographic, then the graded Betti numbers of $I$ are the same of those of $\Gin(I)$ by a result of Aramova, Herzog and Hibi in \cite{AHH}. In positive characteristic it is still true that for a degree reverse lexicographic order the graded Betti numbers of $I$ are the same of those of $\Gin(I)$, provided that $I$ is componentwise linear. But in this case $\Gin(I)$ might be not strongly stable. However, it is known that, at least for componentwise linear ideals, it is stable \cite[Lemma 1.4]{CHH}. The graded Betti numbers of a stable ideal do not depend from the characteristic, because the Elihaou-Kervaire formula \eqref{eliker}. So to compute the graded Betti numbers of $\Gin(I)$ we can consider it in characteristic $0$. Let us call $J$ the ideal $\Gin(I)$ viewed in characteristic $0$. The ideal $J$, being stable, is componentwise linear, so we are done by what said above. Summarizing, we showed:

\begin{prop}\label{bc=bs}
The following sets coincide:
\begin{enumerate}
\item[{\em (1)}] $\{\mbox{Betti tables $(\beta_{i,j}(I))$ where $I\subset P$ is componentwise linear}\}$;
\item[{\em (2)}] $\{\mbox{Betti tables $(\beta_{i,j}(I))$ where $I\subset P$ is strongly stable}\}$;
\end{enumerate}
\end{prop}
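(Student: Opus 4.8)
The plan is to prove the two inclusions between the sets (1) and (2) separately, with the inclusion $(2)\subseteq(1)$ being the routine one. Indeed, as recalled earlier in the paper, every strongly stable monomial space is stable, and every stable ideal is componentwise linear; hence any Betti table that is realized by a strongly stable ideal is automatically realized by a componentwise linear one. So the entire content lies in the reverse inclusion.

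For $(1)\subseteq(2)$ I would start with a componentwise linear ideal $I\subset P$ and seek a strongly stable ideal with exactly the same Betti table. The natural candidate is the generic initial ideal $\Gin(I)$ taken with respect to the degree reverse lexicographic order. By the theorem of Aramova, Herzog and Hibi \cite{AHH}, for this term order the graded Betti numbers of a componentwise linear ideal agree with those of its generic initial ideal, so $\beta_{i,j}(I)=\beta_{i,j}(\Gin(I))$ for all $i,j$. In characteristic $0$ this already finishes the argument, since Galligo's theorem \cite[Theorem 15.23]{Ei} ensures that $\Gin(I)$ is strongly stable.

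The main obstacle is positive characteristic, where $\Gin(I)$ need not be strongly stable, so the above chain no longer lands inside set (2). The repair uses two facts: for a componentwise linear ideal $\Gin(I)$ is still stable by \cite[Lemma 1.4]{CHH}, and the graded Betti numbers of a stable monomial ideal are given by the purely combinatorial Eliahou-Kervaire formula \eqref{eliker}, hence are insensitive to $\chara(K)$. I would therefore let $J$ be the monomial ideal $\Gin(I)$ reinterpreted over a field of characteristic $0$; its Betti table equals that of $\Gin(I)$, and thus that of $I$. Being stable, $J$ is componentwise linear, so the characteristic-$0$ argument applies verbatim to $J$: its generic initial ideal $\Gin(J)$ is strongly stable and carries the same Betti table as $J$. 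This produces the desired strongly stable ideal and closes the chain
\[
\beta_{i,j}(\Gin(J))=\beta_{i,j}(J)=\beta_{i,j}(\Gin(I))=\beta_{i,j}(I),
\]
so that the Betti table of $I$ lies in set (2), completing the proof.
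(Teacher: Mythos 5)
Your proposal is correct and follows essentially the same route as the paper: for the nontrivial inclusion it uses the degrevlex generic initial ideal together with the Aramova--Herzog--Hibi invariance of Betti numbers for componentwise linear ideals, Galligo's theorem in characteristic $0$, and in positive characteristic the stability of $\Gin(I)$ from \cite[Lemma 1.4]{CHH} combined with the characteristic-independence of the Eliahou--Kervaire Betti numbers to reinterpret $\Gin(I)$ in characteristic $0$ and apply the characteristic-$0$ case. The paper leaves the routine inclusion $(2)\subseteq(1)$ implicit, but otherwise your argument matches it step for step.
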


So, we get the following:

\begin{thm}\label{mainlinear}
Let $m_1,\ldots ,m_n$ be a sequence of natural numbers. Then the following are equivalent:
\begin{enumerate}
\item[{\em (1)}] There exists a homogeneous ideal $I\subset P$ with $d$-linear resolution such that $m_k(I)=m_k$ for all $k=1,\ldots ,n$;
\item[{\em (2)}] There exists a strongly stable monomial ideal $I\subset P$ generated in degree $d$ such that $m_k(I)=m_k$ for all $k=1,\ldots ,n$;
\item[{\em (3)}] $(m_1,\ldots ,m_n)$ is an $O$-sequence such that $m_2\leq d$, that is:
\begin{enumerate}
\item[{\em (a)}] $m_1=1$;
\item[{\em (b)}] $m_2\leq d$;
\item[{\em (c)}] $m_{i+1}\leq m_{i}^{\langle i-1 \rangle}$ for any $i=2,\ldots ,n-1$.
\end{enumerate}
\end{enumerate}
\end{thm}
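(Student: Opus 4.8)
The plan is to pivot everything on condition (2), proving $(1)\Leftrightarrow(2)$ and $(2)\Leftrightarrow(3)$ as two separate equivalences. The first equivalence packages the reduction of an arbitrary linear resolution to a strongly stable one, and the second is, at bottom, a translation of Theorem~\ref{characterization} across a shift of the variables.

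First I would dispatch $(2)\Rightarrow(1)$: a strongly stable ideal is stable, and as observed right after the Eliahou--Kervaire formula, a stable ideal generated in degree $d$ has a $d$-linear resolution, so the very same $I$ witnesses (1). For $(1)\Rightarrow(2)$ I would invoke Proposition~\ref{bc=bs}: if $I$ has a $d$-linear resolution it is componentwise linear, hence its Betti table equals the Betti table of some strongly stable ideal $I'$. Since that table has a single nonzero row, $I'$ again has a $d$-linear resolution and is therefore generated in degree $d$; and because each $m_k$ is by \eqref{defmigen} a fixed integral combination of the numbers $\beta_{k,k+d}$, we get $m_k(I')=m_k(I)=m_k$, so $I'$ witnesses (2).

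The heart of the matter is $(2)\Leftrightarrow(3)$, obtained by transporting the situation from $P=K[x_1,\dots,x_n]$ into $S$ via the index shift $x_i\mapsto x_{i-1}$. Given a strongly stable $I\subset P$ generated in degree $d$, its degree-$d$ component $I_d=\langle G(I)\rangle$ is a strongly stable monomial space of $P_d$; carrying it into $S_d$ by the shift yields a strongly stable $V\subset S_d$ supported on $x_0,\dots,x_{n-1}$, and a generator with $m(u)=k$ in $P$ becomes a monomial with $m(u)=k-1$ in $S$, so that $w_i(V)=m_{i+1}(I)$ for $0\le i\le n-1$ and $w_i(V)=0$ for $i\ge n$ (using that for a stable ideal \eqref{defmid} gives $m_k(I)=w_k(\langle G(I)\rangle)$). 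Theorem~\ref{characterization} then says $(w_i(V))_{i\in\NN}$ is an $O$-sequence with $w_1\le d$, and unwinding the two indexing conventions for $O$-sequences recorded in Section~1 turns this precisely into conditions (a)--(c) for $(m_1,\dots,m_n)$. Conversely, from a sequence satisfying (3) I would set $w_i=m_{i+1}$ for $i<n$ and $w_i=0$ for $i\ge n$, check it is an $O$-sequence with $w_1\le d$, and feed it to Theorem~\ref{characterization} to obtain a strongly stable $V\subset S_d$; since $w_i=0$ for $i\ge n$ forces $V$ to be supported on $x_0,\dots,x_{n-1}$, the inverse shift $x_i\mapsto x_{i+1}$ produces a strongly stable monomial space $V'\subset P_d$, and the ideal $I=(V')$ is strongly stable, generated in degree $d$, with $\langle G(I)\rangle=V'$ and hence $m_k(I)=m_k$.

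The substance already lives in Theorem~\ref{characterization} and Proposition~\ref{bc=bs}, so the only genuine care is in the bookkeeping: matching the $1$-indexed invariants $m_k(I)$ on $P$ with the $0$-indexed weights $w_i(V)$ on $S$, reconciling the two conventions of $O$-sequence (the shift $m_k=w_{k-1}$ carrying $m_{i+1}\le m_i^{\langle i-1\rangle}$ to $w_i\le w_{i-1}^{\langle i-1\rangle}$), and checking that passing to the ideal generated by a degree-$d$ strongly stable monomial space neither manufactures new minimal generators nor destroys strong stability, so that $\langle G(I)\rangle$ recovers exactly the space one started from. This is where I expect the argument to demand attention, even though none of it is deep.
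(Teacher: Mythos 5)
Your proposal is correct and follows essentially the same route as the paper's own proof: Proposition~\ref{bc=bs} (together with the facts that a $d$-linear ideal is componentwise linear and that a stable ideal generated in degree $d$ has $d$-linear resolution) gives $(1)\Leftrightarrow(2)$, and the Eliahou--Kervaire identification $m_k(I)=w_k(\langle G(I)\rangle)$ from \eqref{defmid} combined with Theorem~\ref{characterization} gives $(2)\Leftrightarrow(3)$. The only difference is that you spell out the index-shift bookkeeping between $P$ and $S$ (and the two $O$-sequence conventions), which the paper leaves implicit; your accounting of it is accurate.
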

\begin{proof}
By virtue of Proposition \ref{bc=bs}, (1) $\iff$ (2). Moreover, if $I$ is strongly stable, then $m_i(I)=w_i(\langle G(I)\rangle )$ for all $i=1,\ldots ,n$, see \eqref{defmid}. Since the monomial space $\langle G(I)\rangle$ is strongly stable, Theorem \ref{characterization} yields the equivalence (2) $\iff$ (3).
\end{proof}

\begin{example}
Let us see an example: Theorem \ref{mainlinear} assures that we will never find a homogeneous ideal $I\subset R=K[x_1,x_2,x_3,x_4]$ with minimal free resolution:
\[0\longrightarrow R(-6)^6\longrightarrow R(-5)^{22}\longrightarrow R(-4)^{29}\longrightarrow R(-3)^{14}\longrightarrow I\longrightarrow 0.\]
In fact $I$, using \eqref{defmidgen}, should satisfy $m_1(I)=1$, $m_2(I)=3$, $m_3(I)=4$ and $m_4(I)=6$. This is not an $O$-sequence, thus the existence of $I$ would contradict Theorem \ref{mainlinear}.
\end{example}



\begin{thebibliography}{MPT}
\bibitem[AHH]{AHH} A. Aramova, J. Herzog, T. Hibi, {\it Ideals with stable Betti numbers}, Adv. Math. 152 (2000), no. 1, pp. 72--77.
\bibitem[BCP]{BCP} D. Bayer, H. Charalambous, S. Popescu, {\it Extremal Betti numbers and applications to monomial ideals}, J. Algebra 221 (1999), no. 2, pp. 497--512.
\bibitem[BS]{BS} M. Boij, J. S\"oderberg, {\it Graded Betti numbers of Cohen-Macaulay modules and the multiplicity conjecture}, J. Lond. Math. Soc. 78 (2008), pp. 85--106.
\bibitem[BH]{BH} W. Bruns, J. Herzog, {\it Cohen-Macaulay rings}, Cambridge Studies in Advanced Mathematics 39, 1993.
\bibitem[CM]{CM} G. Caviglia, M. Kummini, {\it Betti tables of $p$-Borel fixed ideals}, available at at \url{arXiv:1212.2201v1}, 2012.
\bibitem[CS]{CS} G. Caviglia, E. Sbarra, {\it Zero-generic initial ideals}, available at at \url{arXiv:1303.5373v1}, 2013.
\bibitem[Co]{cocoa} CoCoA Team, {\it CoCoA: a system for doing Computations in Commutative Algebra}, available at \url{http://cocoa.dima.unige.it}.
\bibitem[CHH]{CHH} A. Conca, J. Herzog, T. Hibi, {\it Rigid resolutions and big Betti numbers}, Comment. Math. Helv. 79 (2004), no. 4, pp. 826--839.
\bibitem[CU]{CU} M. Crupi, R. Utano, {\it Extremal Betti numbers of graded ideals}, Result. Math. 43 (2003), pp. 235-244.
\bibitem[ER]{ER} J.A. Eagon, V. Reiner, {\it Resolutions of Stanley Reisner ideals and  Alexander duality}, J. Pure and Appl. Algebra 130 (1988), no. 3, pp. 265--275.
\bibitem[Ei]{Ei} D. Eisenbud, {\it Commutative Algebra with a View Toward Algebraic Geometry}, Graduate Text in Mathematics 150, 1995.
\bibitem[EG]{EG} D. Eisenbud, S. Goto, {\it Linear Free Resolutions and Minimal Multiplicity}, J. Algebra 88 (1984), pp. 89--133.
\bibitem[ES]{ES} D. Eisenbud, F.-O. Schreyer, {\it Betti Numbers of Graded Modules and Cohomology of Vector Bundles}, J. Amer. Math. Soc. 22 (2009), pp. 859--888.
\bibitem[EK]{EK} S. Eliahou, M. Kervaire, {\it Minimal resolutions of some monomial ideals}, J. Algebra 129 (1990), no. 1, pp. 1--25.
\bibitem[HH1]{HH} J. Herzog, T. Hibi, {\it Componentwise linear ideals}, Nagoya Mathematical Journal 153, (1999), pp. 141--153.
\bibitem[HH2]{HH2} J. Herzog, T. Hibi, {\it Monomial ideals}, Graduate Texts in Mathematics 260, 2011.
\bibitem[HSV]{HSV} J. Herzog, L. Sharifan, M. Varbaro, {\it The possible extremal Betti numbers of a homogeneous ideal}, to appear in Proc. Amer. Math. Soc..
\bibitem[HS]{HS} J. Herzog, H. Srinivasan, {\it Bounds for multiplicities}, Trans. Amer. Math. Soc. 350, (1998), no. 7 pp. 2879--2902.
\bibitem[Mu]{Mu} S. Murai, {\it Hilbert functions of $d$-regular ideals}, J. Algebra 317 (2007), no. 2, pp. 658-690.
\bibitem[NR]{NR} U. Nagel, T. R\"omer, {\it Criteria for componentwise linearity}, available at \url{arXiv:1108.3921v2}, 2011.
\bibitem[Te]{Te} N. Terai, {\it Alexander duality theorem and Stanley-Reisner rings. Free resolutions of coordinate rings of projective varieties and related topics}, S\= urikaisekikenky\= usho K\= oki\= uroku No. 1078 (1999), pp. 174--184.
\end{thebibliography}
\end{document}